\theoremstyle{plain}
\newtheorem{theorem}{Theorem}
\newtheorem{lemma}{Lemma}
\newtheorem{corollary}{Corollary}
\def\thm@space@setup{%
  \thm@preskip=\parskip \thm@postskip=0pt
}
\title{Targeted Fibonacci Exponentiation}
\author{Burton S. Kaliski Jr.\footnote{{\tt bkaliski@alum.mit.edu}.  The views expressed are my own and do not necessarily reflect those of my employer.}}
\date{Version 1.0.1 --- November 4, 2017}							% Activate to display a given date or no date
\begin{document}
\maketitle

\begin{abstract}
A targeted exponentiation algorithm computes a group exponentiation operation $a^k$ with a reversible circuit in such a way that the initial state of the circuit consists of only the base $a$ and fixed values, and the final state consists of only the exponential $a^k$ and fixed values.  Three targeted exponentiation algorithms based on Fibonacci addition chains are considered, offering tradeoffs in terms of the number of working registers and the number of iterations.  The approaches also motivate related results on the Fibonacci Zeckendorf array, including a new \emph{modular Hofstadter G problem} and an improvement to Anderson's recent algorithm for locating pairs of adjacent integers in the extended Fibonacci Zeckendorf array.  The algorithms have applications in quantum computing.  
\end{abstract}

\section{Introduction}

Let $a$ be an element of a group $H$, and let $k$ be a positive integer.  The \emph{group exponentiation} of $a$ to the power $k$ is the group element $a^k$ (where the group operation is written multiplicatively).  Group exponentiation can be computed by many different algorithms, including numerous approaches based on binary representations as well as some based on Fibonacci representations \cite{byrne2007comparison} \cite{klein2008should} \cite{meloni2007new}.  

Although binary exponentiation algorithms are generally more efficient than Fibonacci algorithms, this advantage is primarily for classical, non-reversible computing, not for reversible computing in general.  In particular, if the base $a$ is variable, Fibonacci exponentiation may be preferable because its basic step --- the mapping $(a, b) \mapsto (b, ab)$ --- is inherently reversible (\cite{perumalla2013introduction}, Sec. 11).\footnote{This is under the technical condition that the elements involved remain invertible, which can be assured with appropriate parameter choices.}  In contrast, a reversible implementation of the basic step in binary exponentiation --- the mapping $a \mapsto a^2$ --- carries forward the input along with the output.  As a consequence, a reversible circuit for binary exponentiation with a variable base $a$ needs registers for each of the successive squares in the binary ``addition chain.''  A reversible circuit for Fibonacci exponentiation, in contrast, only needs registers for the latest values of $a$ and $b$.  The core Fibonacci addition chain evolves in place.

In either case, a basic reversible exponentiation circuit may produce as output not only the exponential $a^k$, but also other ``side values,'' possibly including the input $a$ itself.  For certain applications, especially in quantum computing, the presence of these additional values can be problematic in terms of their effect on subsequent computation.  In these applications, it is preferable for the circuit to perform a \emph{targeted exponentiation} where the initial state of the registers in the reversible circuit consists of only the base $a$ and fixed ``ancilla'' values (or a ``clean ancilla,'' in the terminology of H{\"a}ner \emph{et al.} \cite{haner2016factoring}), and the final state consists of only the exponential $a^k$ and fixed ``garbage'' values.  Such an approach assumes that $a \mapsto a^k$ is an invertible mapping, i.e., that $\textit{kInv} = k^{-1} \bmod{r}$ exists (equivalently, that $k$ and $r$ are relatively prime, where $r$ is the order of $a$ in $H$).

The rest of the paper is organized as follows.  After preliminaries in Section \ref{section-preliminaries}, the paper presents three algorithms for targeted exponentiation.  A basic approach based on a low-to-high Fibonacci exponentiation algorithm is described in Section \ref{section-basic}.  Two improvements follow:  a ``dual'' approach  in Section \ref{section-dual} based on a variant of the low-to-high algorithm, and a hybrid approach in Section \ref{section-hybrid} that combines a high-to-low algorithm, here called \emph{Hofstadter G pair exponentiation}, with the low-to-high algorithm.  Quantum computing applications are discussed in Section \ref{section-quantum}.  The paper concludes with suggestions for further research.

Appendices include a proof of correctness of the Hofstadter G pair exponentiation algorithm; a solution to a new \emph{modular Hofstadter G problem}; and an improvement to Anderson's recent algorithm \cite{anderson2014extended} for locating pairs of adjacent integers in the extended Fibonacci Zeckendorf array.
%, together with a new \emph{Anderson pair exponentiation algorithm} and a low-to-high alternative to the usual ``greedy'' high-to-low algorithm for determining the Zeckendorf or Fibonacci representation of an integer.

\section{Preliminaries}
\label{section-preliminaries}

\emph{Fibonacci numbers.}  Let $F_0 = 0$, $F_1 = 1$, $F_i = F_{i-1} + F_{i-2}$ for $i \ge 2$ denote the Fibonacci numbers.  Let $\phi = (1 + \sqrt{5}) / 2$ denote the Golden Ratio.  % Note that for all $h \ge 1$, $\phi^h < F_{h+2} < \phi^{h+2}$.

\emph{Bit strings.}  A bit string $\vec{\nu}$ is a sequence of bits $\langle\nu_1 \ldots \nu_{\|\vec{\nu}\|}\rangle$, where $\nu_i \in \{0,1\}$ and ${\|\vec{\nu}\|}$ denotes the length of $\vec{\nu}$.  Let $\langle \rangle$ denote the empty string.  The concatenation of two bit strings $\vec{\upsilon}$ and $\vec{\beta}$ is written $\vec{\upsilon} \| \vec{\beta}$.

The ``up shift'' of a bit string\footnote{``Up'' and ``down'' are preferred here to the usual ``left'' and ``right,'' to focus on the effect of shifting on the significance of the bits, rather than their position, given that Fibonacci representations often have their least significant bits on the left, whereas binary representations often have theirs on the right.}, denoted $\Uparrow$, moves bits to higher-indexed positions, adding in $i$ new $0$ bits at the lowest-indexed positions:
\begin{equation*}
\vec{\nu} \Uparrow i \stackrel{\Delta}{=} \langle \overbrace{0 \ldots 0}^i \enspace \nu_1 \ldots \nu_{\|\vec{\nu}\|} \rangle, \quad i \ge 0 \quad . 
\end{equation*}
The ``down shift,'' denoted $\Downarrow$ similarly moves bits to lower-indexed positions, dropping off the $i$ lowest-indexed bits:
\begin{equation*}
\vec{\nu} \Downarrow i \stackrel{\Delta}{=} \langle \nu_{i+1} \ldots \nu_{\|\vec{\nu}\|} \rangle, \quad 0 \le i \le \|\vec{\nu}\| - 1 \quad .
\end{equation*}
If $i \ge \|\vec{\nu}\|$, then $\vec{\nu} \Downarrow i = \langle \rangle$.

\emph{Fibonacci sum.}  Let $n$ be a non-negative integer.  The \emph{Fibonacci sum} corresponding to a bit string $\vec{\nu}$, denoted $\textsc{FibSum}(\vec{\nu})$, is defined as
\begin{equation*}
\textsc{FibSum}(\vec{\nu}) \stackrel{\Delta}{=} \sum_{i = 1}^{\|\vec{\nu}\|} \nu_i F_{i+1} \quad .
\end{equation*}
(Here, as is conventional, the sum starts at the second $1$, i.e., $F_2$, rather than at $F_1$.)

\emph{Fibonacci representation.}  If $n = \textsc{FibSum}(\vec{\nu})$, then $\vec{\nu}$ is a \emph{Fibonacci representation} of $n$.  Every positive integer $n$ has at least one Fibonacci representation.  If $n \le F_h - 2$, then there exists at least one Fibonacci representation of $n$ that is no more than $h-3$ bits long.  
% The possible Fibonacci representations of an integer may be enumerated by a recursive algorithm [[ref]].

\emph{Zeckendorf representation.}  A Fibonacci representation is in \emph{Zeckendorf form} if its most significant bit, i.e.,  $\nu_{\|\vec{\nu}\|}$, is 1, and no two consecutive bits are $1$.  Every positive integer $n$ has exactly one such \emph{Zeckendorf representation} \cite{zeckendorf1972representation} \cite{lekkerkerker1951voorstelling}.  The Zeckendorf representation of an integer may be determined by a ``greedy'' high-to-low algorithm that repeatedly selects the largest possible Fibonacci number less than or equal to the remaining balance in the integer.  If $F_{h-1} \le n \le F_h - 1$, then the Zeckendorf representation of $n$ is $h-2$ bits long.  

\emph{Hofstadter's G sequence} \cite{hofstadter2000godel}, denoted $G(x)$, may be defined recursively as $G(0) = 0$, $G(1) = 1$, and $G(x) = x - G(G(x-1))$ for $x \ge 2$.  The particular form of the recursion is not directly relevant to the applications described here.  However, the following property, shown by Granville and Rasson \cite{granville1988strange} is:
\begin{equation}
\forall x \ge 0, \quad G(x) = \lfloor \phi^{-1} (x + 1) \rfloor \quad .
\end{equation}
A \emph{Hofstadter G pair} is a pair of positive integers $(u, v)$ such that $u = G(v)$.

\emph{Intervals.}  $(x : y)$ denotes the open interval containing all $z$ between $x$ and $y$.

\emph{Modular arithmetic notation.}  Let $r$ be a positive integer.  $(x)_r$ denotes reduction modulo $r$, i.e., $(x)_r \stackrel{\Delta}{=} x \bmod r$, and $(x : y)_r$ denotes the open ``modular interval'' containing all $z$ between $x$ and $y$ modulo $r$.  Specifically:
\begin{itemize}
\item If $(x)_r < (y)_r$, then $(x : y)_r$ contains all $z$ such that $(x)_r < z < (y)_r$.
\item If $(x)_r > (y)_r$, then it contains all $z$ such that either $(x)_r < z < r$ or $0 < z < (y)_r$.
\end{itemize}

\section{Basic approach}
\label{section-basic}

Following general methods in reversible computing, a basic approach to computing a targeted exponentiation $b = a^k$ combines two reversible exponentiation circuits, one for raising to the power $k$ and the other for raising to the power $\textit{kInv} = k^{-1} \pmod{r}$, i.e., the inverse operation $a = b^{\textit{kInv}}$.

The first circuit combines a conventional exponentiation algorithm that maps $a$ and other fixed inputs to $b$ and possibly input-dependent side values with a ``rewinding'' operation that undoes the computation of the input-dependent side values and replaces them with fixed side values.  

The second circuit likewise combines an algorithm that maps $b$ and other fixed inputs to $a$, followed by its own rewinding operation.  

If the two circuits produce the same intermediate value, then if the first is run in the forward direction and the second is then run in reverse, the combination will produce $b$ and other fixed outputs from $a$ and other fixed inputs --- a targeted exponentiation.

Algorithm $\textsc{FibExp}$, shown in Figure \ref{algorithm-FibExp}, may be employed as the core of this approach.  

\begin{algorithm}
\Indp
\SetKwInOut{Input}{Input}
\SetKwInOut{Output}{Output}
\underline{Algorithm $\textsc{FibExp}$} \\
\BlankLine
\Input{A group element $a$ and a non-negative integer $k$}
\Output{$a^k$}
\BlankLine
\Begin{
  Let $\vec{\kappa}$ be a Fibonacci representation of $k$ and let $\ell = \|\vec{\kappa}\|$\;
  $(c, d, e) \leftarrow (1, a, 1)$\;
  \For{$i \leftarrow 1$ \KwTo $\ell$}{
    $(c, d) \leftarrow (d, cd)$\;
    \lIf{$\kappa_i = 1$}{$e \leftarrow de$}
  }
  \KwRet{$e$}\;
}
\caption{Algorithm $\textsc{FibExp}$ computes an exponential by scanning a Fibonacci representation in ``low-to-high'' order.}
\label{algorithm-FibExp}
\end{algorithm}

The proof of correctness of Algorithm $\textsc{FibExp}$ is not detailed here, but may be shown by induction, where the value of $e$ after the $i$th iteration is $a^{k_i}$ for $k_i = \textsc{FibSum}(\vec{\kappa} \Downarrow (\ell - i))$. 

The number of iterations of Algorithm $\textsc{FibExp}$ equals the length $\ell$ of $\vec{\kappa}$.  If $k < r$ and $r < \phi^h$, then $r \le F_{h+2} - 1$, so $k \le F_{h+2} - 2$.  As a result, there exists at least one Fibonacci representation of $\vec{\kappa}$ with length at most $h-1$ bits, and at least one Zeckendorf representation with length at most $h$.  The number of iterations can thus be bounded by at most $h-1$ with an appropriate choice of representation (or by $h$, if Zeckendorf form is chosen).

The bound $k \le F_{h+2} - 2$ also holds for group orders $r$ between $\phi^h$ and $F_{h+2} - 1$.  However, it is convenient for later analysis to bound $r < \phi^h$, or equivalently to set $h \ge \lceil \log_{\phi} r\rceil$.

Algorithm $\textsc{FibExp}$ may be considered a ``low-to-high'' Fibonacci exponentiation algorithm, because bits of the exponent are scanned in increasing order of significance.  The algorithm is thus a counterpart to traditional ``right-to-left'' binary exponentiation algorithm (with the convention that the rightmost bit of a binary representation is the least significant), where in both cases a sequence of powers of the base are selectively multiplied into the result, based on bits of the exponent.

Building a targeted exponentiation algorithm from Algorithm $\textsc{FibExp}$ involves four main steps, following the basic reversible computing approach above:
\begin{enumerate}
\item Run $\textsc{FibExp}(a, k)$ forward.
\item Rewind the Fibonacci chain steps in $\textsc{FibExp}(a, k)$.  This replaces the input-dependent side values with fixed side values.
\item ``Fast forward'' the Fibonacci chain steps in $\textsc{FibExp}(b, \textit{kInv})$.  (This is now the circuit for raising to $\textit{kInv}$, which is being run in reverse, so ''rewind'' becomes ''fast forward.'')
\item Run $\textsc{FibExp}(b, \textit{kInv})$ in reverse.
\end{enumerate}

Algorithm $\textsc{BasicTargetedFibExp}$ (see Figure \ref{algorithm-BasicTargetedFibExp} illustrates the approach.  Each circuit produces the same intermediate values up to a swap operation, $(1, a, b)$.  Not counting temporary values, Algorithm $\textsc{BasicTargetedFibExp}$ requires just three working registers (to save space, the input $a$ can be put into the register that holds $c$; the output $b$ can be taken out of the register that holds $e$).  For purposes of comparison (see Section \ref{section-conclusion}), the algorithm may be considered to have a ``profile'' of $3 \times 4h$:  three registers, roughly $4h$ iterations.

\begin{algorithm}
\Indp
\SetKwInOut{Input}{Input}
\SetKwInOut{Output}{Output}
\SetKw{KwDownTo}{down to}
\underline{Algorithm $\textsc{BasicTargetedFibExp}$} \\
\BlankLine
\Input{A group element $a$ in a group of order $r$, and integer $k$ between $1$ and $r-1$ that is relatively prime to $r$}
\Output{$a^k$}
\BlankLine
\Begin{
  $\textit{kInv} = k^{-1} \pmod{r}$\;
  Let $\vec{\kappa}$ be a Fibonacci representation of $k$ and let $\ell = \|\vec{\kappa}\|$\;
  Let $\vec{\kappa'}$ be a Fibonacci representation of $\textit{kInv}$ and let $\ell' = \|\vec{\kappa'}\|$\;
  \BlankLine
  \tcp{Run $\textsc{FibExp}(a, k)$ forward, producing $(a^{F_{\ell}}, a^{F_{\ell+1}}, a^k)$}
  $(c, d, e) \leftarrow (1, a, 1)$\;
  \For{$i \leftarrow 1$ \KwTo $\ell$}{
    $(c, d) \leftarrow (d, cd)$\;
    \lIf{$\kappa_i = 1$}{$e \leftarrow de$}
  }
  \BlankLine
  \tcp{Rewind Fibonacci chain for $\textsc{FibExp}(a, k)$, producing $(1, a, a^k)$}
  \For{$i \leftarrow \ell$ \KwDownTo $1$}{
    $(c, d) \leftarrow (c^{-1}d, c)$\;
  }
  \BlankLine
  \tcp{Swap second, third registers, producing $(1, a^k, a)$}
  $(c, d, e) \leftarrow (c, e, d)$\;
  \BlankLine
  \tcp{``Fast forward'' Fibonacci chain for $\textsc{FibExp}(b, \textit{kInv})$, producing $(a^{kF_{\ell'}}, a^{kF_{{\ell'}+1}}, a)$}
  \For{$i \leftarrow 1$ \KwTo $\ell'$}{
    $(c, d) \leftarrow (d, cd)$\;
  }
  \BlankLine
  \tcp{Run $\textsc{FibExp}(b, \textit{kInv})$ in reverse, producing $(1, a^k, 1)$}
  \For{$i \leftarrow \ell'$ \KwDownTo $1$}{
    \lIf{$\kappa'_i = 1$}{$e \leftarrow d^{-1}e$}
    $(c, d) \leftarrow (c^{-1}d, c)$\;
  }
  \KwRet{$d$}\;
}
\caption{Algorithm $\textsc{BasicTargetedFibExp}$ computes a targeted exponentiation by combining two low-to-high Fibonacci exponentiations and intermediate ``rewinding'' and ''fast-forwarding'' loops.}
\label{algorithm-BasicTargetedFibExp}
\end{algorithm}

It is possible to do better by choosing different intermediate values to match.  Two alternate approaches described next offer different tradeoffs between the number of registers and the number of iterations.  

\emph{Remark.}  If $k$ is relatively prime to $r$ so that the mapping $a \mapsto a^k$ is invertible, and the element $a$ is not the group identity, then the various intermediate elements will also not be the group identity.  As a result, the group inverses, i.e., $a^{-1}$ and $b^{-1}$ in the rewinding operations, will be well defined.  Computing the inverses of these elements do not necessarily add significant complexity, depending on the group and the implementation.

\section{Dual approach}
\label{section-dual}

The intermediate values in the basic approach include both $a$ and $b$, as well as the fixed value $1$.  However, it is not necessary that either the input or the output be among the intermediate values.  One way to reduce the number of iterations is to match the outputs of the core Fibonacci exponentiation algorithms directly, rather than rewinding to a common value.  

The core algorithms compute, at the very least, the Fibonacci chain values $a^{F_{\ell}}, a^{F_{\ell+1}}$ and $b^{F_{\ell'}}, b^{F_{\ell'+1}}$, respectively, where $\ell$ and $\ell'$ are the respective number of iterations.  If instead of computing $a^k$ and $b^{\textit{kInv}}$, each core algorithm were to compute the other algorithm's Fibonacci chain values, then the rewinding and fast-forwarding loops would no longer be needed.

The core of this approach is Algorithm $\textsc{FibExpDual}$, shown in Figure \ref{algorithm-FibExpDual}, which computes two exponentials via low-to-high Fibonacci exponentiation.

\begin{algorithm}
\Indp
\SetKwInOut{Input}{Input}
\SetKwInOut{Output}{Output}
\underline{Algorithm $\textsc{FibExpDual}$} \\
\BlankLine
\Input{A group element $a$ and two non-negative integers $s$ and $t$}
\Output{$(a^s, a^t)$}
\BlankLine
\Begin{
  Let $\vec{\sigma}$ and $\vec{\tau}$ be Fibonacci representations of $s$ and $t$, respectively, padded if necessary with most significant $0$ bits to be the same length.  Let $\ell = \|\vec{\sigma}\| = \|\vec{\tau}\|$\;
  $(c, d, e, f) \leftarrow (1, a, 1, 1)$\;
  \For{$i \leftarrow 1$ \KwTo $\ell$}{
    $(c, d) \leftarrow (d, cd)$\;
    \lIf{$\sigma_i = 1$}{$e \leftarrow de$}
    \lIf{$\tau_i = 1$}{$f \leftarrow df$}
  }
  \KwRet{$(e, f)$}\;
}
\caption{Algorithm $\textsc{FibExpDual}$ computes two exponentials in parallel via low-to-high Fibonacci exponentiation.}
\label{algorithm-FibExpDual}
\end{algorithm}

Building a targeted exponentiation algorithm with this approach involves two main steps:
\begin{enumerate}
\item Run $\textsc{FibExpDual}(a, s, t)$ forward, where $s = k F_h \bmod{r}$ and $t = k F_{h+1} \bmod{r}$.
\item Run $\textsc{FibExpDual}(b, s', t')$ in reverse, where $s' = \textit{kInv} \cdot F_h \bmod{r}$, and $t' = \textit{kInv} \cdot F_h \bmod{r}$.
\end{enumerate}

Algorithm $\textsc{DualTargetedFibExp}$ (see Figure \ref{algorithm-DualTargetedFibExp}) illustrates the approach.  The intermediate values again the same up to a swap operation:  $(a^{F_h}, a^{F_{h+1}}, b^{F_h}, b^{F_{h+1}})$.  
Not counting temporary values, Algorithm $\textsc{DualTargetedFibExp}$ requires four working registers.  Its profile is thus $4 \times 2h$ --- half the number of iterations as the basic approach, with one additional register.

\begin{algorithm}
\Indp
\SetKwInOut{Input}{Input}
\SetKwInOut{Output}{Output}
\SetKw{KwDownTo}{down to}
\underline{Algorithm $\textsc{DualTargetedFibExp}$} \\
\BlankLine
\Input{A group element $a$ in a group of order $r$, and integer $k$ between $1$ and $r-1$ that is relatively prime to $r$}
\Output{$a^k$}
\BlankLine
\Begin{
  Let $h$ be a positive integer such that $r < \phi^h$\;
  $\textit{kInv} \leftarrow k^{-1} \pmod{r}$\;
  $s \leftarrow kF_h \pmod{r}$\;
  $t \leftarrow kF_{h+1} \pmod{r}$\;
  $s' \leftarrow \textit{kInv} \cdot F_h \pmod{r}$\;
  $t' \leftarrow \textit{kInv} \cdot F_{h+1} \pmod{r}$\;
   Let $\vec{\sigma}$ and $\vec{\tau}$ be Fibonacci representations of $s$ and $t$ of length $h$, respectively, padded if necessary with most significant $0$ bits\;
   Let $\vec{\sigma'}$ and $\vec{\tau'}$ be Fibonacci representations of $s'$ and $t'$ of length $h$, respectively, padded if necessary with most significant $0$ bits\;
\BlankLine
  \tcp{Run $\textsc{FibDualExp}(a, s, t)$ forward, producing $(a^{F_h}, a^{F_{h+1}}, a^{kF_h}, a^{kF_{h+1}})$}
  $(c, d, e, f) \leftarrow (1, a, 1, 1)$\;
  \For{$i \leftarrow 1$ \KwTo $h$}{
    $(c, d) \leftarrow (d, cd)$\;
    \lIf{$\sigma_i = 1$}{$e \leftarrow de$}
    \lIf{$\tau_i = 1$}{$f \leftarrow df$}
  }
  \BlankLine
  \tcp{Swap first, second pair of registers, producing $(a^{kF_h}, a^{kF_{h+1}}, a^{F_h}, a^{F_{h+1}})$}
  $(c, d, e, f) \leftarrow (e, f, c, d)$\;
  \BlankLine
  \tcp{Run $\textsc{FibDualExp}(b, s', t')$ in reverse, producing $(1, a^k, 1, 1)$}
  \For{$i \leftarrow h$ \KwDownTo $1$}{
    \lIf{$\tau'_i = 1$}{$f \leftarrow d^{-1}f$}
    \lIf{$\sigma'_i = 1$}{$e \leftarrow d^{-1}e$}
    $(c, d) \leftarrow (c^{-1}d, c)$\;
  }
  \KwRet{$d$}\;
}
\caption{Algorithm $\textsc{DualTargetedFibExp}$ computes a targeted exponentiation by combining two dual low-to-high Fibonacci exponentiations.}
\label{algorithm-DualTargetedFibExp}
\end{algorithm}

\emph{Remark.} Whereas Algorithm $\textsc{FibExpDual}$ only requires that the lengths of each pair of Fibonacci representations
% , $(\vec{\sigma}, \vec{\tau})$ and $(\vec{\sigma'}, \vec{\tau'})$ respectively, 
be the same, Algorithm $\textsc{DualTargetedFibExp}$ ties all four together at $h$, for a practical reason:  the values of each pair of exponents depend on the lengths of the \emph{other} pair of representations.  Setting all four lengths to $h$ avoids a circular dependency.  

\section{Hybrid approach}
\label{section-hybrid}

The previous two approaches are both based on low-to-high Fibonacci algorithm, and generate Fibonacci chains which then must be either rewound or matched.  A different approach is possible by employing a \emph{high-to-low} Fibonacci algorithm, a counterpart of the traditional ``left-to-right'' binary algorithm, where bits are scanned in in \emph{decreasing} order of significance and the base is instead selectively multiplied \emph{into} the sequence of powers.  

Algorithm $\textsc{HGPExp}$, shown in Figure \ref{algorithm-HGPExp}, illustrates this approach.  

As the name suggests, Algorithm $\textsc{HGPExp}$ not only computes $a^v$, but also $a^{G(v)}$ --- a \emph{Hofstadter G pair exponentiation}.  (A proof of correctness of Algorithm $\textsc{HGPExp}$ is given in Appendix \ref{section-HGPExp-proof}.)

\begin{algorithm}
\Indp
\SetKwInOut{Input}{Input}
\SetKwInOut{Output}{Output}
\SetKw{KwDownTo}{down to}
\underline{Algorithm $\textsc{HGPExp}$} \\
\BlankLine
\Input{A group element $a$ and a non-negative integer $v$}
\Output{$(a^{G(v)},a^v)$}
\BlankLine
\Begin{
  Let $\vec{\beta}$ be a Fibonacci representation of $v$ and let $\ell = \|\vec{\beta}\|$ \;
  $(c, d, e) \leftarrow (a, 1, 1)$\;
  \For{$i \leftarrow \ell$ \KwDownTo $1$}{
    \lIf{$\beta_i = 1$}{$d \leftarrow cd$}
    $(d, e) \leftarrow (e, de)$\;
  }
  \KwRet{$(d, e)$}
}
\caption{Algorithm $\textsc{HGPExp}$ computes a high-to-low Fibonacci or Hofstadter G pair exponentiation.}
\label{algorithm-HGPExp}
\end{algorithm}

The output of Algorithm $\textsc{HGPExp}$, run in the forward direction, can be arranged to match the output of Algorithm $\textsc{FibExp}$ run in reverse, based on the following observation.

\begin{lemma}
If $\ell'$ is odd and $0 < k < \phi^{\ell'}$, then $(kF_{\ell'}, kF_{\ell'+1})$ is a Hofstadter G pair.
\label{lemma-hybrid-length}
\end{lemma}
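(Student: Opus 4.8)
The plan is to reduce the claim to the closed form $G(x) = \lfloor \phi^{-1}(x+1) \rfloor$ supplied earlier (Granville--Rasson) and then evaluate the floor directly. Writing $m = \ell'$ for brevity, a Hofstadter G pair $(kF_m, kF_{m+1})$ is exactly the assertion $kF_m = G(kF_{m+1}) = \lfloor \phi^{-1}(kF_{m+1}+1)\rfloor$, so the whole lemma comes down to pinning down this one floor.

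First I would record the elementary identity relating consecutive Fibonacci numbers to $\phi$. Writing $\psi = -\phi^{-1}$ (the conjugate root), Binet's formula $F_n = (\phi^n - \psi^n)/\sqrt{5}$ yields after a one-line computation $F_{n+1} - \phi F_n = \psi^n = (-1)^n \phi^{-n}$. This is where the parity hypothesis enters: since $m=\ell'$ is odd, the correction term is negative, giving $F_{m+1} = \phi F_m - \phi^{-m}$ and hence $\phi^{-1}F_{m+1} = F_m - \phi^{-(m+1)}$.

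Substituting, I obtain
$$\phi^{-1}(kF_{m+1}+1) = kF_m + \bigl(\phi^{-1} - k\,\phi^{-(m+1)}\bigr),$$
so it suffices to show the parenthesized offset lies in $[0,1)$; the floor is then exactly $kF_m$. The upper bound is immediate, since $k>0$ forces the offset below $\phi^{-1} < 1$. For the lower bound I would invoke the hypothesis $k < \phi^{m}$, which gives $k\,\phi^{-(m+1)} < \phi^{-1}$ and hence a strictly positive offset; this is the only place the bound $k < \phi^{\ell'}$ is used. Positivity of $kF_m$ (from $k>0$ and $F_m>0$, as $\ell'\ge 1$) then confirms the pair consists of positive integers.

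I expect the main, though modest, obstacle to be bookkeeping the sign of the perturbation correctly. The argument hinges on $\ell'$ being odd so that $\psi^{\ell'}<0$ and the term $-k\,\phi^{-(m+1)}$ pulls the value just below $kF_m$, while the $+\phi^{-1}$ arising from the $+1$ inside the floor, together with the constraint $k<\phi^{m}$, keeps it strictly within the same integer band. For even $\ell'$ the sign flips and this clean statement would fail, so the parity condition is essential rather than incidental — a point worth flagging explicitly in the write-up.
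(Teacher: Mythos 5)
Your proposal is correct and takes essentially the same route as the paper's own proof: both expand $G$ via the Granville--Rasson closed form, substitute the identity $F_{\ell'+1} = \phi F_{\ell'} + (-\phi)^{-\ell'}$ (which you derive from Binet's formula rather than simply quote), and use the oddness of $\ell'$ to fix the sign of the correction term so that the offset $\phi^{-1} - k\,\phi^{-(\ell'+1)}$ lies in $[0,1)$, with the hypothesis $k < \phi^{\ell'}$ supplying exactly the lower bound. Your bookkeeping of the perturbation's sign and of where each hypothesis is used is, if anything, slightly more explicit than the paper's.
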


\begin{proof}
Expand the definition of $G$ and apply the property $F_{\ell'+1} = \phi F_{\ell'} + (-\phi)^{-{\ell'}}$:
\begin{eqnarray*}
G(kF_{\ell'+1}) & = & \lfloor \phi^{-1}(kF_{\ell'+1} + 1) \rfloor \\
& = & \lfloor \phi^{-1}[k(\phi F_{\ell'} + (-\phi)^{-{\ell'}}] + 1) \rfloor \\
& = & kF_{\ell'} + \lfloor -(-\phi)^{-({\ell'}+1)}k + \phi^{-1} \rfloor \quad .
\end{eqnarray*}
Because $-(-\phi)^{-({\ell'}+1)} = -\phi^{-({\ell'}+1)}$ if $\ell'$ is odd, the result equals $kF_i$ provided that $\phi^{-(\ell'+1)}k < \phi^{-1}$, corresponding to the bound above.
\end{proof}

Algorithm $\textsc{HGPExp}(a, kF_{\ell'+1})$ thus produces $(a^{kF_{\ell'}}, a^{kF_{\ell'+1}})$.

A targeted exponentiation can therefore be computed by a hybrid of the high-to-low and low-to-high Fibonacci exponentiation algorithms:
\begin{enumerate}
\item Run $\textsc{HGPExp}(a, kF_{\ell'+1})$ forward, where $\ell'$ is an odd integer such that $0 < k < \phi^{\ell'}$ and $\textit{kInv}$ has a Fibonacci representation of length at most $\ell'$.
\item Run $\textsc{FibExp}(b, $\textit{kInv}$)$ in reverse.
\end{enumerate}

Algorithm $\textsc{HybridTargetedFibExp}$ (see Figure \ref{algorithm-HybridTargetedFibExp}) illustrates the approach.  The intermediate values, as already discussed, are $(a^{kF_{\ell'}}, a^{kF_{{\ell'}+1}}, a)$.  The number of iterations of Algorithm $\textsc{HGPExp}$ equals the length of the Fibonacci representation of $v = kF_{{\ell'}+1}$.  Given that both $k$ and $\textit{kInv}$ are between $0$ and $r-1$, it follows that $\ell' \le h$ if $h$ is odd, and $\ell' \le h+1$ if $h$ is even, so $v < rF_{h+2}$.  Applying the bound on $rF_{h+2}$ developed in the proof of Theorem \ref{theorem-MHG}, it follows that $v < F_{2h+2} - 2$.  Thus there exists at least one Fibonacci representation $\vec{\beta}$ of $v$ with length at most $2h-2$.  The Fibonacci representation of $\textit{kInv}$, including padding, meanwhile, has length at most $h$ if $h$ is odd and at most $h+1$ if $h$ is even.  The profile of Algorithm $\textsc{HybridTargetedFibExp}$ is thus $3 \times 3h$ --- roughly three quarters the number of iterations as the basic approach, and the same number of working registers.

\begin{algorithm}
\Indp
\SetKwInOut{Input}{Input}
\SetKwInOut{Output}{Output}
\SetKw{KwDownTo}{down to}
\underline{Algorithm $\textsc{HybridTargetedFibExp}$} \\
\BlankLine
\Input{A group element $a$ in a group of order $r$, and an integer $k$ between $1$ and $r-1$ that is relatively prime to $r$}
\Output{$a^k$}
\BlankLine
\Begin{
  $\textit{kInv} = k^{-1} \pmod{r}$\;
  Let $\ell'$ be an odd integer such that $r < \phi^{\ell'}$ and $\textit{kInv}$ has a Fibonacci representation of length at most $\ell'$\;
  $v \leftarrow kF_{{\ell'}+1}$\;
  \BlankLine
  \tcp{Run $\textsc{HGPExp}(a, v)$, producing $(a, a^{G(v)}, a^v)$}
  Let $\vec{\beta}$ be a Fibonacci representation of $v$\;
  $(c, d, e) \leftarrow (a, 1, 1)$\;
  \For{$i \leftarrow \|\vec{\beta}\|$ \KwDownTo $1$}{
    \lIf{$\beta_i = 1$}{$d \leftarrow cd$}
    $(d, e) \leftarrow (e, de)$\;
  }
  \BlankLine
  \tcp{Reorder registers, producing $(a^{kF_{\ell'}}, a^{kF_{{\ell'}+1}}, a)$}
  $(c, d, e) \leftarrow (d, e, c)$\;
  \BlankLine
  \tcp{Run $\textsc{FibExp}(b, \textit{kInv})$ in reverse, producing $(1, a^k, 1)$}
  Let $\vec{\kappa'}$ be a Fibonacci representation of $\textit{kInv}$, padded if necessary with most significant $0$ bits to length ${\ell'}$\;
  \For{$i \leftarrow \ell'$ \KwDownTo $1$}{
    \lIf{$\kappa'_i = 1$}{$e \leftarrow d^{-1}e$}
    $(c, d) \leftarrow (c^{-1}d, c)$\;
  }
  \KwRet{$d$}\;
}
\caption{Algorithm $\textsc{HybridTargetedFibExp}$ computes a targeted exponentiation by combining a Hofstadter G pair exponentiation and a low-to-high Fibonacci exponentiation.}
\label{algorithm-HybridTargetedFibExp}
\end{algorithm}

The following example demonstrates the hybrid approach.  Let $r = 177$ and let $k = 76$.  Then $k < \phi^{11}$ and $\textit{kInv} = k^{-1} \bmod{r} = 106$ has a Fibonacci representation of length at most $11$, so one may choose $\ell' = 11$.

To compute $a^{76}$, Algorithm $\textsc{HybridTargetedFibExp}$ first sets $v = kF_{12} = 76 \cdot 144 = 10944$.  Algorithm $\textsc{HybridTargetedFibExp}$ then computes $\textsc{HGPExp}(a, 10944)$.  After reordering the registers, the algorithm computes the reverse of $\textsc{FibExp}(b, 106)$.  

The traces of these algorithms, in terms of the exponents corresponding to the registers, are shown in Figure \ref{figure-hybrid-trace}.  For both algorithms, a Zeckendorf representation of the exponent is employed, though any Fibonacci representation would work for the first algorithm, and any Fibonacci representation with length $11$, including padding, would work for the second.  The exponents after the first algorithm are $(1, G(v), v) = (1, 6764, 10944)$.   The exponents after the swap and the second algorithm are $(0, k, 1 - \textit{kInv} \cdot k) = (0, 76, -8055)$, or equivalently $(0, 76, 0)$ modulo $179$.  The final values of the registers are thus $(a^0, a^{76}, a^0) = (1, b, 1)$, the desired targeted result.

\begin{figure}
\begin{center}
\begin{tabular}{||c||c|c|c|c||}
\hline \hline
\multicolumn{5}{||c||}{$\textsc{HGPExp}(a, 10944)$ trace} \\
\hline \hline
$i$ & $\beta_i$ & $c$ exp. & $d$ exp. & $e$ exp. \\ \hline
--- & --- & 1 & 0 & 0 \\ \hline
19 & 1 & 1 & 1 & 1 \\ \hline
18 & 0 & 1 & 1 & 2 \\ \hline
17 & 1 & 1 & 3 & 4 \\ \hline
16 & 0 & 1 & 4 & 7 \\ \hline
15 & 1 & 1 & 8 & 12 \\ \hline
14 & 0 & 1 & 12 & 20 \\ \hline
13 & 1 & 1 & 21 & 33 \\ \hline
12 & 0 & 1 & 33 & 54 \\ \hline
11 & 1 & 1 & 55 & 88 \\ \hline
10 & 0 & 1 & 88 & 143 \\ \hline
9 & 1 & 1 & 144 & 232 \\ \hline
8 & 0 & 1 & 232 & 376 \\ \hline
7 & 1 & 1 & 377 & 609 \\ \hline
6 & 0 & 1 & 609 & 986 \\ \hline
5 & 1 & 1 & 987 & 1596 \\ \hline
4 & 0 & 1 & 1596 & 2583 \\ \hline
3 & 1 & 1 & 2584 & 4180 \\ \hline
2 & 0 & 1 & 4180 & 6764 \\ \hline
1 & 0 & 1 & 6764 & 10944 \\ 
\hline \hline
\end{tabular}
\quad \quad
\begin{tabular}{||c||c|c|c|c||}
\hline \hline
\multicolumn{5}{||c||}{$\textsc{FibExp}(b, 106)$ reverse trace} \\
\hline \hline
$i$ & $\kappa'_i$ & $c$ exp. & $d$ exp. & $e$ exp. \\ \hline
--- & --- & 6764 & 10944 & 1 \\ \hline
11 & 0 & 4180 & 6764 & 1 \\ \hline
10 & 1 & 2584 & 4180 & -6763 \\ \hline
9 & 0 & 1596 & 2584 & -6763 \\ \hline
8 & 0 & 988 & 1596 & -6763 \\ \hline
7 & 0 & 608 & 988 & -6763 \\ \hline
6 & 1 & 380 & 608 & -7751 \\ \hline
5 & 0 & 228 & 380 & -7751 \\ \hline
4 & 0 & 152 & 228 & -7751 \\ \hline
3 & 1 & 76 & 152 & -7979 \\ \hline
2 & 0 & 76 & 76 & -7979 \\ \hline
1 & 1 & 0 & 76 & -8055 \\
\hline \hline
\end{tabular}
\end{center}
\caption{Example traces of Algorithms $\textsc{HGPExp}$ and $\textsc{FibExp}$ (in reverse) in the hybrid approach.}
\label{figure-hybrid-trace}
\end{figure}

\emph{Remark.}  Algorithm $\textsc{HybridTargetedFibExp}$ chooses $\ell'$ to be an odd number, so that Lemma \ref{lemma-hybrid-length} can be applied.  A corollary of Lemma \ref{lemma-hybrid-length} shows that if $\ell'$ is even and $0 < k < \phi^{\ell'+1}$, then $(kF_{\ell'}-1, kF_{{\ell'}+1}-1)$ is a Hofstadter G pair.   The intermediate value $(a^{kF_{\ell'}}, a^{kF_{{\ell'}+1}}, a)$ can thus alternatively be computed by choosing an even $\ell'$ such that $\textit{kInv}$ has a Fibonacci representation of length at most $\ell'$, computing $\textsc{HGPExp}(a, kF_{\ell'+1}-1)$, and multiplying each of the resulting values $(a^{kF_{\ell'}-1}, a^{kF_{{\ell'}+1}-1})$ by $a$.  The alternative potentially decreases the number of iterations of both Algorithms $\textsc{HGPExp}$ and $\textsc{FibExp}$, because $\ell'$ is smaller, while adding the two multiplications by $a$.

Another alternative that also potentially decreases the number of operations is to find a Hofstadter G pair $(u, v)$ that is \emph{congruent} to the intended values $(kF_{\ell'}, kF_{{\ell'}+1})$ modulo $r$, rather than matching the values exactly.  This is an example of a more general \emph{modular Hofstadter G problem}, which is to find a Hofstadter G pair $(u, v)$ such that
\begin{eqnarray*}
u & \equiv & s \pmod{r} \\
v & \equiv & t \pmod{r} \quad ,
\end{eqnarray*}
where $r$ is a positive integer and $s$ and $t$ are integers between $0$ and $r-1$.  For the purposes of the hybrid approach, the relevant parameters are $s = kF_{\ell'} \bmod{r}$ and $t = kF_{{\ell'}+1} \bmod{r}$.  One may choose $\ell' = h$ for convenience, or $\ell'$ as the possibly smaller length of the Fibonacci representation of $\textit{kInv}$.  A general solution to the modular Hofstadter G problem is presented in Appendix \ref{section-MHG}.

\section{Quantum exponentiation}
\label{section-quantum}

The design of efficient circuits for exponentiation on a quantum computer has been well studied since Shor's breakthrough quantum algorithms for integer factoring and the discrete logarithm problem in \cite{shor1999polynomial}.

In Shor's algorithm, the base $a$ for the exponentiation operation is fixed as an external input to the quantum circuit.  As a result, the successive squares of $a$, i.e., $a, a^2, a^4, \ldots$ can be precomputed and ``compiled'' into a sequence of reversible multiplication circuits.  The output $b = a^k$ can then be computed by conditionally applying the multiplication circuits, based on the corresponding bits of $k$, following the binary ``right-to-left'' exponentiation algorithm.  % (From a quantum computing perspective, the fixed base is raised to the ``superposition'' of multiple possible exponents, producing a superposition of exponentials.)  
Binary exponentiation in this case only requires a fixed number of quantum registers, so is well matched to Shor's algorithm, making it the focus of research in quantum exponentiation so far.

In other applications, however, the base $a$ --- or even the parameters of the group --- may be variable, generated by previous operations within a quantum circuit. For example, Bernstein \emph{et al.}'s ingenious recent combination \cite{bernstein2017post} of Grover's quantum search algorithm with the Elliptic Curve Method for integer factoring computes exponentiation operations in multiple \emph{groups} in superposition.  Though the exponent may be fixed, the parameters of the elliptic curve group and thus the successive squares in binary exponentiation will vary, and therefore can't be precompiled into the circuit.  This makes binary exponentiation less efficient because of the larger number of quantum registers required for all the successive squares, and favors Fibonacci exponentiation.  

Bernstein \emph{et al.}'s algorithm does not involve targeted exponentiation, however, because the exponential itself does not need to be carried forward into further computation.  Rather, it is sufficient just to check whether the exponential is the group identity.  This check can be made alongside any side values, even input-dependent ones; the side values as well as the exponential can then be rewound, restoring the initial input, i.e., $a$.  As a result, a core Fibonacci exponentiation algorithm such as Algorithm $\textsc{FibExp}$ or $\textsc{HGPExp}$ is sufficient, without any of the additional complexity of the targeted exponentiation approaches.

One application where targeted Fibonacci exponentiation would be relevant is when a superposition of multiple bases is raised to a fixed exponent, where the resulting superposition of exponentials is then carried forward into subsequent computation.  As further discussed in Sec. 6.3 of \cite{cryptoeprint:2017:745}, such an exponentiation could potentially transform a random eigenstate of a group operation in Shor's algorithm into a fixed eigenstate, which may be beneficial for further computation.  The targeting ensures that individual exponentials in the resulting superposition are not entangled with side values.   

\section{Conclusion}
\label{section-conclusion}

As quantum computing moves steadily toward practicality, quantum algorithm research, especially for cryptanalytic applications, has taken on a much more practical note as well, focusing on exact rather than asymptotic complexity \cite{roetteler2017quantum}.  At this point in the development of the technology, every qubit and iteration counts, making optimization particularly important.  

Although targeted exponentiation appears to have only limited applications at this time, it may nevertheless be helpful to have such algorithms among the tools to apply as new applications are explored.  The three approaches are discussed here offer tradeoffs in terms of the number of working registers and iterations, as summarized in Figure \ref{figure-comparison}.  Further improvements may be possible, perhaps not even involving Fibonacci addition chains at all.  Proving lower bounds on the number of working registers and iterations remains an open question.

\begin{figure}
\begin{center}
\begin{tabular}{||c||c|c|c||}
\hline \hline
Iterations & Basic & Dual & Hybrid \\ \hline \hline
$0$ & $(1, a, 1)$ & $(1, a, 1, 1)$ & $(a, 1, 1)$ \\
& $\updownarrow$ & $\updownarrow$ & \multirow{3}{*}{$\Bigg\updownarrow$} \\
$h$ & $(a^{F_{\ell}}, a^{F_{\ell+1}}, a^k)$ & $(a^{F_h}, a^{F_{h+1}}, a^{kF_h}, a^{kF_{h+1}})$ & \\
& $\updownarrow$ & $\updownarrow$ & \\
$2h$ & $(1, a, a^k)$ & $(1, a^k, 1, 1)$ & $(a, a^{kF_{\ell'}}, a^{kF_{\ell'+1}})$ \\ \cline{3-3}
& $\updownarrow$ & & $\updownarrow$ \\
$3h$ & $(a^{kF_{\ell'}}, a^{kF_{\ell'+1}}, a)$ & & $(1, a^k, 1)$ \\ \cline{4-4}
& $\updownarrow$ & & \\
$4h$ & $(1, a^k, 1)$ & & \\ \hline \hline
Profile & $3 \times 4h$ & $4 \times 2h$ & $3 \times 3h$ \\ \hline \hline
\end{tabular}
\end{center}
\caption{Comparison of three Fibonacci-based approaches for targeted exponentiation.  ``Profile'' is number of working registers times rough number of iterations, rounded to multiples of $h = \lceil \log_{\phi} r \rceil$.}
\label{figure-comparison}
\end{figure}

\section*{Acknowledgements}

This paper, similar to the one that motivated it \cite{cryptoeprint:2017:745}, was written on personal time, and I again thank my family for their encouragement and support.  The paper also provided an opportunity to reconnect to Prof. Peter Anderson, long-time faculty member at the Rochester Institute of Technology, who supervised my role as an adjunct faculty member at the start of my career.  I am grateful to him for his helpful technical comments as well as for his contributions, alongside many other mathematicians, to the remarkable theory and practice of Fibonacci numbers.

\bibliography{fibexp-bibliography}
\bibliographystyle{alpha}

\appendix

\section{Proof of correctness of Algorithm $\textsc{HGPExp}$}
\label{section-HGPExp-proof}

Several building blocks will help establish the correctness of Algorithm $\textsc{HGPExp}$.

\begin{lemma}
If $(u, v)$ is a Hofstadter G pair, then $(v, u + v)$ and $(v + 1, u + v + 1)$ are Hofstadter G pairs.
\label{lemma-Hofstadter-recurrence}
\end{lemma}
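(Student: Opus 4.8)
The plan is to work entirely through the Granville--Rasson closed form $G(x) = \lfloor \phi^{-1}(x+1) \rfloor$, reducing each of the two claims to a floor identity that follows from a single pair of inequalities on $u$. First I would restate the hypothesis $u = G(v)$ in the form $\phi^{-1}(v+1) - 1 < u \le \phi^{-1}(v+1)$, and then sharpen the right-hand inequality to the strict bound $u < \phi^{-1}(v+1)$: since $(u,v)$ is a pair of positive integers we have $v \ge 1$, so $\phi^{-1}(v+1)$ is irrational and therefore can never equal the integer $u$. These two bounds, together with the defining identity $\phi - 1 = \phi^{-1}$ (equivalently $\phi^2 = \phi + 1$), are the only ingredients the argument needs. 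Positivity of the two new pairs is automatic, since $v$, $u+v$, $v+1$, and $u+v+1$ are all at least $1$.

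For the first pair I would establish $G(u+v) = v$ by unfolding the floor condition $v \le \phi^{-1}(u+v+1) < v+1$. Multiplying through by $\phi$ and simplifying with $\phi - 1 = \phi^{-1}$ converts this into the requirement $\phi^{-1}v - 1 \le u < \phi^{-1}v + \phi^{-1}$; the right half is exactly the sharpened hypothesis bound $u < \phi^{-1}(v+1)$, while the required left bound is implied by the strictly stronger lower hypothesis bound $u > \phi^{-1}(v+1) - 1$. For the second pair I would similarly reduce $G(u+v+1) = v+1$, i.e.\ $v+1 \le \phi^{-1}(u+v+2) < v+2$, to the condition $\phi^{-1}v + \phi^{-1} - 1 \le u < \phi^{-1}v + 2\phi^{-1}$ (here using $\phi - 2 = \phi^{-1} - 1$ on the low side). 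Now the left half is precisely the lower hypothesis bound, and the right half follows from $u < \phi^{-1}(v+1)$ together with $\phi^{-1} > 0$.

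The two conversions from a floor condition to a linear inequality in $u$ are routine, each using only $\phi^2 = \phi + 1$. The one place that genuinely requires care --- and the step I expect to be the main obstacle --- is the bookkeeping of strict versus non-strict inequalities: the floor bound $u \le \phi^{-1}(v+1)$ \emph{must} be upgraded to a strict inequality via irrationality, since otherwise the right endpoint in the first pair would fail at the boundary, whereas the lower bounds can be left as obtained. A cleaner but less uniform alternative would be combinatorial: using the fact that $G$ shifts a Zeckendorf representation down by one index, the step $(u,v) \mapsto (v, u+v)$ becomes the Fibonacci index shift $F_c \mapsto F_{c+1}$, which manifestly preserves the no-two-consecutive property. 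However, this route handles the offset pair $(v+1, u+v+1)$ awkwardly because of the added $1$, so I would keep the real-analytic argument above as the primary proof.
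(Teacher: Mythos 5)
Your proof is correct and takes essentially the same route as the paper: both arguments run entirely through the Granville--Rasson closed form $G(x) = \lfloor \phi^{-1}(x+1) \rfloor$, manipulate the resulting inequalities on $u$ using $\phi - 1 = \phi^{-1}$, and handle the strict/non-strict boundary exactly as the paper does (its parenthetical that $\phi^{-1}(v+1)$ cannot be an integer is your irrationality sharpening). The only difference is organizational --- you unfold the target floor condition backward into bounds on $u$, while the paper pushes the hypothesis interval forward --- which is not a substantive departure.
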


\begin{proof}
If $(u, v)$ is a Hofstadter G pair, then by definition, $u = G(v) = \lfloor \phi^{-1} (v + 1) \rfloor$.  Rewrite this as interval membership:
\begin{equation*}
\phi^{-1} (v + 1) \in (u : u + 1) \quad .
\end{equation*}
(An open interval on the lower bound is appropriate because $\phi^{-1}(v + 1)$ cannot be an integer.)
Add $v + 1$ to both sides:
\begin{equation*}
\phi (v + 1) \in (u + v + 1 : u + v + 2) \quad .
\end{equation*}
Now multiply by $\phi^{-1}$:
\begin{equation*}
v + 1 \in (\phi^{-1} (u + v + 1) : \phi^{-1} (u + v + 2)) \quad .
\end{equation*}
Next, rewrite as an interval for $\phi^{-1} (u + v + 1)$:
\begin{equation*}
\phi^{-1} (u + v + 1) \in (v + 1 - \phi^{-1} : v + 1) \subset (v : v + 1) \quad .
\end{equation*}
It follows that $\lfloor \phi^{-1} (u + v + 1) \rfloor = v$, which implies that $v = G(u + v)$ and that $(v, u + v)$ is a Hofstadter G pair.  The proof for $(v + 1, u + v + 1)$ is similar.
\end{proof}

\begin{lemma} If $(u, v)$ is a Hofstadter G pair and $v = \textsc{FibSum}(\vec{\beta})$ for some Fibonacci representation $\vec{\beta}$, then $u = \textsc{FibSum}(\vec{\beta} \Downarrow 1) + \beta_1$.
\label{lemma-Hofstadter-downshift}
\end{lemma}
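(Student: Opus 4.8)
The plan is to reduce the statement to a clean arithmetic identity and then prove it by induction using Lemma~\ref{lemma-Hofstadter-recurrence}. First I would rewrite the target value: since $F_1 = 1$, the $\beta_1$ term is exactly $\beta_1 F_1$, so
\[
\textsc{FibSum}(\vec{\beta} \Downarrow 1) + \beta_1 = \sum_{i=2}^{\|\vec{\beta}\|} \beta_i F_i + \beta_1 F_1 = \sum_{i=1}^{\|\vec{\beta}\|} \beta_i F_i \quad .
\]
Recalling that $v = \textsc{FibSum}(\vec{\beta}) = \sum_i \beta_i F_{i+1}$ and that $(u,v)$ being a Hofstadter G pair just means $u = G(v)$, the lemma is equivalent to the self-contained claim that for every bit string $\vec{\beta}$ the pair $\big(\sum_i \beta_i F_i,\ \sum_i \beta_i F_{i+1}\big)$ is a Hofstadter G pair. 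In other words, $G$ maps the integer with ``$F_{i+1}$-weighted'' representation $\vec{\beta}$ to the integer with the same bit pattern but ``$F_i$-weighted'' --- a one-step down-shift of the Fibonacci representation.

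I would prove this claim by induction on $\ell = \|\vec{\beta}\|$. The base cases $\ell \le 1$ are immediate from $G(0) = 0$ and $G(1) = 1$. For the inductive step, write $\vec{\beta} = \langle \beta_1 \rangle \,\|\, \vec{\beta}'$ with $\vec{\beta}' = \vec{\beta} \Downarrow 1$ of length $\ell - 1$, and let $(u', v')$ be the pair associated with $\vec{\beta}'$; by the induction hypothesis it is a Hofstadter G pair. A short computation with the Fibonacci recurrence shows that $\sum_{i \ge 2} \beta_i F_i = v'$ and $\sum_{i \ge 2} \beta_i F_{i+1} = u' + v'$, so that (using $F_1 = F_2 = 1$) one has $u = \beta_1 + v'$ and $v = \beta_1 + (u' + v')$. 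Thus the pair associated with $\vec{\beta}$ is $(v', u' + v')$ when $\beta_1 = 0$ and $(v' + 1, u' + v' + 1)$ when $\beta_1 = 1$ --- exactly the two pairs that Lemma~\ref{lemma-Hofstadter-recurrence} guarantees to be Hofstadter G pairs. The induction therefore closes.

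The main obstacle is the index bookkeeping in the inductive step: one must check that prepending a least-significant bit shifts every Fibonacci weight up by one so that the associated pair lands precisely on one of the two recurrence outputs, and that the $\beta_1 = 0$ and $\beta_1 = 1$ cases correspond to the first and second outputs of Lemma~\ref{lemma-Hofstadter-recurrence}, respectively. As an alternative that avoids the induction, I could argue directly from $G(v) = \lfloor \phi^{-1}(v+1) \rfloor$: the identity $F_{i+1} = \phi F_i + (-\phi)^{-i}$ (used in the proof of Lemma~\ref{lemma-hybrid-length}) gives $\phi^{-1}(v+1) = u^* + \phi^{-1}\big(1 + \sum_i \beta_i (-\phi)^{-i}\big)$ with $u^* = \sum_i \beta_i F_i$, and it then suffices to show $-1 < \sum_i \beta_i (-\phi)^{-i} < \phi^{-1}$, whence the fractional part lies in $(0,1)$ and the floor equals $u^*$. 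The delicate point on that route is obtaining \emph{strict} inequalities: each finite $0/1$ subsum of the alternating series $\sum_i (-\phi)^{-i}$ is strictly dominated by the corresponding infinite geometric series, whose positive part sums to exactly $\phi^{-1}$ and whose negative part sums to exactly $-1$.
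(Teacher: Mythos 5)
Your main argument is correct, and it takes a genuinely different route from the paper. The paper proves the lemma directly and analytically: it writes $u = G(v)$ as the interval membership $\phi^{-1}(v+1) \in (u : u+1)$, substitutes the identity $F_{i+1} = \phi F_i + (-\phi)^{-i}$ into the Fibonacci sum for $v$, bounds the resulting error term $\epsilon = -\sum_i \beta_i (-\phi)^{-(i+1)}$ by $-\phi^{-1} < \epsilon < \phi^{-2}$, and concludes that the integer $\sum_i \beta_i F_i$ lies in the interval $(u - 1 : u + 1)$ and hence equals $u$. You instead reformulate the lemma as the self-contained claim $G\bigl(\sum_i \beta_i F_{i+1}\bigr) = \sum_i \beta_i F_i$ and prove it by induction on $\|\vec{\beta}\|$, observing that prepending a least-significant bit $\beta_1$ sends the pair $(u', v')$ of the shifted string to $(v' + \beta_1,\, u' + v' + \beta_1)$, which is exactly one of the two outputs of Lemma~\ref{lemma-Hofstadter-recurrence}; this is the same bookkeeping the paper performs, in the opposite direction, in Lemma~\ref{lemma-Hofstadter-sums} and in the induction of Theorem~\ref{theorem-HGPExp}. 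Your route confines all analytic work to Lemma~\ref{lemma-Hofstadter-recurrence} and exhibits the downshift lemma as a purely combinatorial consequence of it (arguably closer in spirit to Letouzey's treatment mentioned in the paper's remark), whereas the paper's direct estimate keeps the lemma self-contained and produces, as a byproduct, the explicit bound on $\epsilon$ that is reused later in Lemma~\ref{lemma-Hofstadter-first-column}. One small point to tidy up: your induction hypothesis is applied to $\vec{\beta} \Downarrow 1$, which may be the all-zero string, and its associated pair $(0,0)$ is not a ``pair of positive integers'' as the paper defines Hofstadter G pairs; you should either phrase the induction in terms of the relation $u = G(v)$ for $v \ge 0$ (the proof of Lemma~\ref{lemma-Hofstadter-recurrence} uses nothing more) or dispose of that case directly via $G(0) = 0$ and $G(1) = 1$ --- the same convention the paper itself adopts when it starts the induction of Theorem~\ref{theorem-HGPExp} at $(u_{\ell+1}, v_{\ell+1}) = (0,0)$. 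Finally, note that your sketched alternative (bounding $\sum_i \beta_i (-\phi)^{-i}$ strictly between $-1$ and $\phi^{-1}$) is not really a second route: it is precisely the paper's proof, with the error term scaled by $\phi$.
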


\begin{proof}
Rewrite $G(v)$ as above and expand $v$:
\begin{equation*}
\phi^{-1} \left( \sum_{i = 1}^{\|\vec{\beta}\|} \beta_i F_{i+1} + 1 \right) \in (u : u + 1) \quad .
\end{equation*}
Now apply the property $F_{i+1} = \phi F_i + (-\phi)^{-i}$ to each term of the sum:
\begin{equation*}
\phi^{-1} \left( \sum_{i = 1}^{\|\vec{\beta}\|} \beta_i F_{i+1} + 1 \right) = 
\phi^{-1} \left( \sum_{i = 1}^{\|\vec{\beta}\|} \beta_i (\phi F_i + (-\phi)^{-i}) + 1 \right) = 
\sum_{i = 1}^{\|\vec{\beta}\|} \beta_i F_i + \epsilon + \phi^{-1} \quad ,
\end{equation*}
where the error term $\epsilon$ is defined as
\begin{equation*}
\epsilon = - \sum_{i = 1}^{\|\vec{\beta}\|} \beta_i (-\phi)^{-(i+1)} \quad .
\end{equation*}
The error term can readily be shown to bounded by $-\phi^{-1} < \epsilon < \phi^{-2}$, which implies that
\begin{equation*}
\sum_{i = 1}^{\|\vec{\beta}\|} \beta_i F_i  \in (u - \epsilon - \phi^{-1} : u + 1 - \epsilon - \phi^{-1}) \subset (u - 1 : u + 1) \quad .
\end{equation*}
The left-hand side is equivalent to $\textsc{FibSum}(\vec{\beta} \Downarrow 1) + \beta_1$.  Because the only integer in the interval on the right-hand side is $u$, the result follows.
\end{proof}

It is noteworthy that even though $v$ may have more than one Fibonacci representation $\vec{\beta}$, all lead to the same $u = G(v)$.

\emph{Remark.}  The preceding lemmas are both proved in a different way by Letuozey, without reference to the interval bounds on $v$ (\cite{letouzey2015hofstadter}, Theorems 9 and 14). 

\begin{lemma} 
If $(u, v)$ is a Hofstadter G pair and $v = \textsc{FibSum}(\vec{\beta})$, then $u + v = \textsc{FibSum}(\langle 0 \rangle \| \vec{\beta})$ and $u + v + 1 = \textsc{FibSum}(\langle 1 \rangle \| \vec{\beta})$.
\label{lemma-Hofstadter-sums}
\end{lemma}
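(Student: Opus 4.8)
The plan is to reduce both claimed identities to a single computation of $u+v$ as a Fibonacci sum, using Lemma \ref{lemma-Hofstadter-downshift} to obtain a closed form for $u$ and then combining it with the given expansion of $v$.

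First I would unwind the two target quantities. Concatenating a single bit on the low-order end is exactly an up-shift by one place (the convention of the footnote puts the least significant bit at position $1$), so prepending $\langle 0 \rangle$ leaves position $1$ empty and moves each $\beta_i$ from position $i$ to position $i+1$. Hence
\begin{equation*}
\textsc{FibSum}(\langle 0 \rangle \| \vec{\beta}) = \sum_{i=1}^{\|\vec{\beta}\|} \beta_i F_{i+2},
\end{equation*}
and $\textsc{FibSum}(\langle 1 \rangle \| \vec{\beta})$ is this same sum plus the contribution $F_2 = 1$ of the new leading bit. It therefore suffices to establish $u + v = \sum_i \beta_i F_{i+2}$; the second identity follows at once by adding $1$.

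Next I would write $u$ and $v$ explicitly. By hypothesis $v = \sum_i \beta_i F_{i+1}$, and by Lemma \ref{lemma-Hofstadter-downshift} we have $u = \textsc{FibSum}(\vec{\beta} \Downarrow 1) + \beta_1$. Re-indexing the down-shifted sum (the bit $\beta_i$, for $i \ge 2$, now contributes $F_i$) gives $u = \beta_1 + \sum_{i \ge 2} \beta_i F_i$. Adding $u$ and $v$ term by term, the terms with $i \ge 2$ collapse via the Fibonacci recurrence $F_i + F_{i+1} = F_{i+2}$, while the $i = 1$ contributions, namely $\beta_1$ from $u$ and $\beta_1 F_2 = \beta_1$ from $v$, sum to $2\beta_1 = \beta_1 F_3$, which matches the $i = 1$ term of the target sum. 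This yields $u + v = \sum_i \beta_i F_{i+2}$, as required.

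The only real care needed---and the nearest thing to an obstacle---is the index bookkeeping at the boundary $i = 1$: the shift by one position in the $\textsc{FibSum}$ indices, the $+\beta_1$ correction from Lemma \ref{lemma-Hofstadter-downshift}, and the coincidences $F_2 = 1$ and $F_3 = 2$ must all line up so that the single low-order bit is counted exactly once with the correct Fibonacci weight. Once that boundary term is verified, the rest of the sum telescopes under the recurrence with no further subtlety.
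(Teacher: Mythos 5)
Your proof is correct and follows essentially the same route as the paper's: both invoke Lemma \ref{lemma-Hofstadter-downshift} to express $u$ as a Fibonacci sum, add it termwise to the expansion of $v$ using the recurrence $F_i + F_{i+1} = F_{i+2}$, and identify the result with $\textsc{FibSum}(\vec{\beta} \Uparrow 1) = \textsc{FibSum}(\langle 0 \rangle \| \vec{\beta})$. The only difference is presentational: the paper absorbs the $\beta_1$ correction silently by writing $u = \sum_i \beta_i F_i$ (using $F_1 = 1$) and dismisses the $u+v+1$ case as ``similar,'' whereas you spell out the $i=1$ boundary term and obtain the second identity by adding $F_2 = 1$.
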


\begin{proof}
Expand and add the Fibonacci sums for $u$ and $v$:
\begin{equation*}
u + v = \sum_{i = 1}^{\|\vec{\beta}\|} \beta_i F_i + \sum_{i = 1}^{\|\vec{\beta}\|} \beta_i F_{i+1} = \sum_{i = 1}^{\|\vec{\beta}\|} \beta_i F_{i+2} \quad .
\end{equation*}
The sum is thus equivalent to $\textsc{FibSum}(\vec{\beta} \Uparrow 1) = \textsc{FibSum}(\langle 0 \rangle \| \vec{\beta})$.  The proof for $(v + 1, u + v + 1)$ is similar.
\end{proof}

Lemmas \ref{lemma-Hofstadter-recurrence}--\ref{lemma-Hofstadter-sums} now may be applied to show the following.

\begin{theorem}
Algorithm $\textsc{HGPExp}(a, v)$ computes $(a^{G(v)}, a^v)$, i.e., $(a^u, a^v)$ where $(u, v)$ is the Hofstadter G pair associated with $v$.
\label{theorem-HGPExp}
\end{theorem}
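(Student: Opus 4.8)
The plan is to prove the theorem by induction on the number of loop passes, maintaining the invariant that the registers $d$ and $e$ always hold $a$ raised to the two components of the Hofstadter G pair associated with the portion of $\vec{\beta}$ scanned so far. Concretely, for $0 \le j \le \ell$ let $\vec{\beta}^{(j)} = \vec{\beta} \Downarrow (\ell - j)$ be the string of the $j$ most significant bits of $\vec{\beta}$, re-indexed to occupy positions $1$ through $j$, and set $v_j = \textsc{FibSum}(\vec{\beta}^{(j)})$ and $u_j = G(v_j)$, so that $(u_j, v_j)$ is a Hofstadter G pair by definition. The invariant I would carry is that after the first $j$ iterations of the loop---with $j = 0$ denoting the state just before the loop begins---the registers satisfy $(d, e) = (a^{u_j}, a^{v_j})$, while $c$ remains fixed at $a$ throughout. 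Since $\vec{\beta}^{(\ell)} = \vec{\beta}$, establishing the invariant at $j = \ell$ gives $(d, e) = (a^{G(v)}, a^v)$, which is exactly the claim.

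First I would dispatch the base case $j = 0$: before the loop, $\vec{\beta}^{(0)} = \langle \rangle$, so $v_0 = 0$ and $u_0 = G(0) = 0$, and the initialization $(c, d, e) \leftarrow (a, 1, 1)$ gives $(d, e) = (a^0, a^0)$ as required. This also covers the degenerate case $v = 0$.

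The heart of the argument is the inductive step, which hinges on the observation that decrementing the loop variable prepends one lower-order bit at the least significant position: from the definition of the down-shift, $\vec{\beta}^{(j+1)} = \langle \beta_{\ell - j} \rangle \| \vec{\beta}^{(j)}$. Writing $i = \ell - j$ for the loop variable used in the $(j+1)$-st iteration, Lemma~\ref{lemma-Hofstadter-sums} identifies the new Fibonacci sum as $v_{j+1} = u_j + v_j + \beta_i$, while Lemma~\ref{lemma-Hofstadter-recurrence} shows that $(v_j + \beta_i,\, u_j + v_j + \beta_i)$ is a Hofstadter G pair; since the first component of the pair associated with a given value is unique (it is $G$ of the second), this pins down $(u_{j+1}, v_{j+1}) = (v_j + \beta_i,\, u_j + v_j + \beta_i)$. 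It therefore suffices to check that one pass of the loop body sends $(d, e) = (a^{u_j}, a^{v_j})$ to $(a^{\,v_j + \beta_i}, a^{\,u_j + v_j + \beta_i})$---that is, that the conditional multiplication by $c = a$ together with the update $(d, e) \leftarrow (e, de)$ realizes, in the exponents, the map $(u, v) \mapsto (v + \beta_i,\, u + v + \beta_i)$ in both the $\beta_i = 0$ and $\beta_i = 1$ cases. Combining this with the induction hypothesis advances the invariant from $j$ to $j+1$.

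The main obstacle I anticipate is bookkeeping rather than conceptual: keeping the down-shift indexing consistent so that the single bit introduced at each step is indeed $\beta_i$ sitting at re-indexed position $1$ (so that Lemma~\ref{lemma-Hofstadter-sums} applies verbatim), and then confirming that the exponent arithmetic of the loop body lines up with the recurrence of Lemma~\ref{lemma-Hofstadter-recurrence} in the two cases. Once the per-iteration transformation is aligned with the two lemmas, the induction closes at once. A secondary point worth recording explicitly is that the output does not depend on the choice of Fibonacci representation $\vec{\beta}$: because $u = G(v)$ depends only on $v$, as observed after Lemma~\ref{lemma-Hofstadter-downshift}, the value $(a^{G(v)}, a^v)$ is well defined regardless of which representation the algorithm scans.
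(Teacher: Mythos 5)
Your proposal is correct and takes essentially the same approach as the paper's own proof: the same induction over the most-significant bits scanned so far, with Lemma~\ref{lemma-Hofstadter-sums} tracking the Fibonacci sum and Lemma~\ref{lemma-Hofstadter-recurrence} showing the exponent pair remains a Hofstadter G pair under the per-iteration map $(u,v)\mapsto(v+\beta_i,\,u+v+\beta_i)$; your choice to define $u_j = G(v_j)$ outright and pin down the recurrence by uniqueness, rather than defining the exponents by the recurrence and deducing they form G pairs as the paper does, is only a cosmetic difference. The loop-body verification you defer is handled in the paper by asserting exactly the same exponent map (compare its recurrence $u_i = v_{i+1}+\beta_i$, $v_i = u_{i+1}+v_{i+1}+\beta_i$ and the trace in Figure~\ref{figure-hybrid-trace}), so the two arguments coincide on that point as well.
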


\begin{proof}
Let $(b_{\ell+1}, c_{\ell+1}) = (1, 1)$ be the initial values of $b$ and $c$, and let $(b_i, c_i)$ be the values after the $i$th iteration of the \textbf{for} loop, where $i$ runs from $\ell$ down to $1$.  Let $(u_{\ell+1}, v_{\ell+1}) = (0, 0)$ and let $(u_i, v_i)$ denote the exponents corresponding to $(b_i, c_i)$:
\begin{eqnarray*}
u_i & = & v_{i+1} + \beta_i \\
v_i & = & u_{i+1} + v_{i+1} + \beta_i \quad .
\end{eqnarray*}

Applying Lemmas \ref{lemma-Hofstadter-recurrence}--\ref{lemma-Hofstadter-sums}, it is straightforward to see by induction that each pair $(u_i, v_i)$ is a Hofstadter G pair and that the $i$th exponent $v_i$ satisfies
\begin{equation*}
v_i = \textsc{FibSum}(\langle \beta_i \ldots \beta_{\ell} \rangle) \quad .
\end{equation*}
The final exponent pair is thus $(G(v), v) = (u, v)$, and the result follows.
\end{proof}

Given that $H$ is a group of order $r$, $\textsc{HGPExp}(a, v)$ equivalently computes $(a^s, a^t)$ where $s \equiv G(v) \pmod{r}$ and $t \equiv v \pmod{r}$.  Algorithm $\textsc{HGPExp}$ can thus be targeted to produce a specific output pair $(a^s, a^t)$ given an appropriate Hofstadter G pair, hence the motivation for the problem described next.

\section{Modular Hofstadter G problem}
\label{section-MHG}

Let $r$ be a positive integer and let $s$ and $t$ be integers between $0$ and $r-1$.  The \emph{modular Hofstadter G problem}, denoted $\textrm{MHG}(r, s, t)$, is to find a Hofstadter G pair $(u, v)$ such that
\begin{eqnarray*}
u & \equiv & s \pmod{r} \\
v & \equiv & t \pmod{r} \quad .
\label{equation-Hofstadter-G}
\end{eqnarray*}

The MHG problem arises in connection with the hybrid approach in Section \ref{section-hybrid}, but may have more general applications.  The problem may be viewed as a modular arithmetic counterpart to Anderson's algorithm for finding a pair of adjacent integers in the extended Fibonacci Zeckendorf array \cite{anderson2014extended} (see also Appendix \ref{section-anderson}), though focusing here only on the right half of the array.

The following building block will prove helpful in the solution.

\begin{lemma}
\label{lemma-FindW}
Let $r$ be a positive integer and let $\gamma$ be a real number, $0 \le \gamma < 1$.  If $r < \phi^h$ for some positive integer $h$, then there exists at least one integer $w$ in the range $0 \le w \le F_{h+2}-1$ such that $(\phi^{-1}w)_1 \in (\gamma : \gamma + 1/r)_1$.
\end{lemma}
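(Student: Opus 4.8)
The plan is to recast the claim as a covering statement on the circle $\mathbb{R}/\mathbb{Z}$ and reduce it to a bound on the largest gap between consecutive multiples of $\phi^{-1}$. Writing $x_w = (\phi^{-1} w)_1$ for $0 \le w \le F_{h+2}-1$, the set $P = \{x_0, \ldots, x_{F_{h+2}-1}\}$ consists of $F_{h+2}$ points on the circle, and the modular interval $(\gamma : \gamma + 1/r)_1$ is simply an open arc $I$ of length $1/r$. I would first argue that if $I$ contained no element of $P$, then $I$, being connected, would lie inside a single one of the open arcs into which $P$ partitions the circle, so $|I|$ would be at most the length of that gap. Hence it suffices to prove that every gap has length strictly less than $1/r$, and since $r < \phi^h$ gives $1/r > \phi^{-h}$, it is enough to show that the \emph{maximum gap} of $P$ is at most $\phi^{-h}$.

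For the gap bound I would use the arithmetic of $\phi^{-1}$ supplied by the identity $F_{i+1} = \phi F_i + (-\phi)^{-i}$ already invoked in the paper. Taking $i = m-1$ and dividing by $\phi$ yields $\phi^{-1} F_m = F_{m-1} + (-1)^{m-1}\phi^{-m}$, so the distance from $\phi^{-1}F_m$ to the nearest integer is exactly $\phi^{-m}$; in particular $F_h$ and $F_{h+1}$ are the indices below $N = F_{h+2}$ whose multiples of $\phi^{-1}$ approach an integer most closely, from opposite sides (by the opposite signs $(-1)^{h-1}$ and $(-1)^{h}$), with one-sided defects $\phi^{-h}$ and $\phi^{-(h+1)}$. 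Because $F_h + F_{h+1} = F_{h+2} = N$, the three-distance (three-gap) theorem applies in its degenerate two-gap case: the $N$ points of $P$ cut the circle into arcs of exactly the two lengths $\phi^{-h}$ and $\phi^{-(h+1)}$. The larger of these, $\phi^{-h}$, is the maximum gap, which is precisely the bound needed.

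Combining the two parts finishes the proof: $1/r > \phi^{-h}$ equals the maximum gap of $P$, so the open arc $I = (\gamma : \gamma + 1/r)_1$ cannot fit inside any single gap and must meet $P$, yielding an integer $w$ with $0 \le w \le F_{h+2}-1$ and $(\phi^{-1}w)_1 \in (\gamma : \gamma + 1/r)_1$. I expect the main obstacle to be making the gap structure rigorous rather than merely citing it. Concretely, I would either appeal directly to the three-distance theorem (checking that the hypothesis $p+q = N$ with $p = F_h$, $q = F_{h+1}$ puts us in the two-gap regime) or prove the two-gap structure from scratch by induction on $m$, showing that passing from the $F_m$ points to the $F_{m+1}$ points inserts each of the $F_{m-1}$ new multiples $\phi^{-1}w$ (for $F_m \le w < F_{m+1}$) into a distinct long gap and splits it into one short gap and one new long gap, so that the gap lengths move from $\{\phi^{-(m-2)}, \phi^{-(m-1)}\}$ to $\{\phi^{-(m-1)}, \phi^{-m}\}$. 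Verifying that the insertions land in distinct gaps, and that none overshoots, is the delicate bookkeeping; everything else is routine.
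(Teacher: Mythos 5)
Your proposal is correct, and it takes a genuinely different route from the paper's proof. The paper argues constructively and splits into cases on the parity of $h$: it places the rational point $j/F_{h+2}$, $j = \lceil \gamma F_{h+2} \rceil$, inside the target arc, builds two explicit candidates (for even $h$, $w_L = jF_{h+1} \bmod F_{h+2}$ and $w_H = (j+1)F_{h+1} \bmod F_{h+2}$, using $(F_{h+1})^2 \equiv 1 \pmod{F_{h+2}}$), shows the arc between $(\phi^{-1}w_L)_1$ and $(\phi^{-1}w_H)_1$ also contains $j/F_{h+2}$ and has width at most $\phi^{-h}$, and concludes that the wider target arc must contain one of the two endpoints. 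You instead reduce the lemma to a maximum-gap bound for the Kronecker sequence of $\phi^{-1}$: the $F_{h+2}$ points $(\phi^{-1}w)_1$ cut the circle into gaps of exactly the two lengths $\phi^{-h}$ and $\phi^{-(h+1)}$ (the degenerate two-gap case of the three-distance theorem, legitimate here since $F_{h+2} = F_h + F_{h+1}$ is a sum of consecutive convergent denominators of $\phi^{-1}$), so any open arc of length $1/r > \phi^{-h}$ must meet the point set. Your supporting identity $\phi^{-1}F_m = F_{m-1} + (-1)^{m-1}\phi^{-m}$ and the claimed gap lengths check out (e.g., $F_4 = 3$ points give gaps $\phi^{-2}, \phi^{-2}, \phi^{-3}$), and you correctly flag that rigorizing the gap structure is the real work --- your fallback induction essentially amounts to proving the three-distance theorem for $\phi^{-1}$, which is more labor than the paper's direct computation, so citing the theorem is the economical choice. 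The trade-off: your argument is cleaner (no parity case split, since the parity is absorbed into the sign $(-1)^{m-1}$) and proves a sharper structural fact (all gap lengths, not just a bound), but it is purely existential. The paper's proof, by contrast, is exactly what justifies the subsequent lemma on Algorithm $\textsc{FindW}$, which tests only the two explicit candidates $w_L, w_H$; to recover that algorithm from your route you would still need the modular-inverse computation identifying the endpoints of the gap containing $\gamma$, which is in essence the paper's construction.
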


\begin{proof}
Consider first the case that $h$ is even.  

Because $r < \phi^h < F_{h+2}$, the width of the interval $(\gamma : \gamma + 1/r)_1$ is greater than $1/F_{h+2}$, so it contains the value $j / F_{h+2}$ where $j = \lceil \gamma F_{h+2} \rceil$.  

Now consider the second interval $(\phi^{-1} w_L : \phi^{-1} w_H)_1$ where $w_L = j F_{h+1} \bmod{F_{h+2}}$ and $w_H = (j+1) F_{h+1} \bmod{F_{h+2}}$.  Because $\phi^{-1} = F_{h+1}/F_{h+2} - \phi^{-(h+2)}/F_{h+2}$ and $(F_{h+1})^2 \equiv 1 \pmod{F_{h+2}}$ for even $h$, the interval can be rewritten as
\begin{equation*}
\left (\frac{j}{F_{h+2}} - w_L \frac{\phi^{-(h+2)}}{F_{h+2}} : \frac{j+1}{F_{h+2}} - w_H \frac{\phi^{-(h+2)}}{F_{h+2}} \right )_1 \quad .
\end{equation*}

Because $\phi^{-(h+2)} < 1 / F_{h+2}$ and both $w_L$ and $w_H$ are between $0$ and $F_{h+2} - 1$, the ``error terms'' in the second interval bounds are both less than $1 / F_{h+2}$.  The second interval thus also contains $j / F_{h+2}$.  

The width of the second interval is $1/F_{h+2} + (w_L - w_H) \phi^{-(h+2)}/F_{h+2}$.  Based on the definition of $w_L$ and $w_H$, the difference $(w_L - w_H)$ is either $-F_{h+1}$ or $F_{h+2} - F_{h+1} = F_h$, so the maximum width of the interval is $1/F_{h+2} + F_h \phi^{-(h+2)}/F_{h+2}$.  Because $F_{h+2} = \phi^2 F_h + \phi^{-h}$ for even $h$, the width simplifies to
\begin{equation*}
\frac{1}{F_{h+2}} + F_h \frac{\phi^{-(h+2)}}{F_{h+2}} = \frac{1 + \phi^{-(h+2)} F_h}{\phi^2 F_h + \phi^{-h}} = \phi^{-h} \quad .
\end{equation*}
(The minimum width is $1/F_{h+2} - F_{h+1} \phi^{-(h+2)}/F_{h+2} = \phi^{-(h+1)}$.)

Because the two intervals overlap, the wider interval must contain at least one of the endpoints of the narrower.  Because $1/r > \phi^{-h}$, $(\gamma : \gamma + 1/r)_1$ is wider, and thus contains at least one of $(\phi^{-1} w_L)_1$ and $(\phi^{-1} w_H)_1$, thereby producing a solution $w$.

The proof for the case that $h$ is odd is similar, with the candidates instead defined as $w_L = j F_h \bmod{F_{h+2}}$ and $w_H = (j-1) F_h \bmod{F_{h+2}}$.
\end{proof}

\begin{lemma}
Algorithm $\textsc{FindW}$, shown in Figure \ref{algorithm-FindW},  finds a multiple of $\phi^{-1}$ in the interval $(\gamma : \gamma + 1/r)_1$.  
\end{lemma}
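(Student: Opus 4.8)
The plan is to argue that Algorithm $\textsc{FindW}$ is nothing more than a direct, executable rendering of the constructive argument already contained in the proof of Lemma \ref{lemma-FindW}. That proof does not merely assert the existence of a suitable $w$; it exhibits two explicit candidates and shows that at least one of them lands in the target interval. Correctness of $\textsc{FindW}$ therefore reduces to checking that the algorithm computes exactly those two candidates and then returns one that passes the interval test.

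First I would confirm that the algorithm forms the pivot index $j = \lceil \gamma F_{h+2} \rceil$, matching the choice in Lemma \ref{lemma-FindW}, and that it branches on the parity of $h$. For even $h$ the two candidates are $w_L = j F_{h+1} \bmod F_{h+2}$ and $w_H = (j+1) F_{h+1} \bmod F_{h+2}$; for odd $h$ they are $w_L = j F_h \bmod F_{h+2}$ and $w_H = (j-1) F_h \bmod F_{h+2}$. Next I would verify that the algorithm evaluates, for each candidate, whether the mod-$1$ value $(\phi^{-1} w)_1$ lies in the open modular interval $(\gamma : \gamma + 1/r)_1$, and that it returns a candidate satisfying this condition (equivalently, the multiple $\phi^{-1} w$ of $\phi^{-1}$).

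Correctness then follows immediately: Lemma \ref{lemma-FindW}, invoked under the standing hypothesis $r < \phi^h$, guarantees that at least one of $(\phi^{-1} w_L)_1$ and $(\phi^{-1} w_H)_1$ lies in $(\gamma : \gamma + 1/r)_1$. Since the algorithm tests both candidates and returns one that passes, the returned value $w$ satisfies $(\phi^{-1} w)_1 \in (\gamma : \gamma + 1/r)_1$, so $\phi^{-1} w$ is the desired multiple of $\phi^{-1}$ in the interval.

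I expect the main obstacle to be purely a matter of bookkeeping rather than mathematics: making sure the modular reductions and the parity branching in the pseudocode reproduce exactly the candidate set $\{w_L, w_H\}$ of the lemma's proof, and that the interval membership is tested as an open, mod-$1$ condition. All of the delicate geometry --- the overlap of the two intervals and the width estimate $1/r > \phi^{-h}$ forcing $(\gamma : \gamma + 1/r)_1$ to contain an endpoint of the candidate interval --- has already been discharged in Lemma \ref{lemma-FindW}, so no new estimates are needed here.
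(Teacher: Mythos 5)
Your proposal is correct and matches the paper's own proof, which likewise disposes of the lemma in one line by observing that Algorithm $\textsc{FindW}$ simply executes the steps (computing $j$, $w_L$, $w_H$) from the constructive proof of Lemma \ref{lemma-FindW}. Your only addition is to make explicit the key point that the proof of that lemma (not merely its existence statement) guarantees one of the two specific candidates $w_L$, $w_H$ lies in the interval, which is a worthwhile clarification but the same argument.
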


\begin{proof}
This follows from the steps in Lemma \ref{lemma-FindW} for computing $j$, $w_H$, and $w_L$.
\end{proof}

\emph{Remark.}  Algorithm $\textsc{FindW}$ may be simplified by computing $w_H$ only if needed, and by observing that $w_H = w_L + F_{h+1} \bmod F_{h+2}$ regardless whether $h$ is even or odd.

\begin{algorithm}
\Indp
\SetKwInOut{Input}{Input}
\SetKwInOut{Output}{Output}
\underline{Algorithm $\textsc{FindW}$} \\
\BlankLine
\Input{A positive integer $r$ and a real number $\gamma$ between $0$ and $1$}
\Output{A non-negative integer $w$ such that $(\phi^{-1}w)_1 \in (\gamma : \gamma + 1/r)_1$}
\BlankLine
\Begin{
  Let $h$ be the least positive integer such that $r < \phi^h$. \\
  $j \leftarrow \lceil \gamma F_{h+2} \rceil$\;
  \uIf{$h$ is even}{
    $w_L \leftarrow j F_{h+1} \bmod F_{h+2}$\;
    $w_H \leftarrow (j + 1) F_{h+1} \bmod F_{h+2}$\;
  }
  \Else{
    $w_L \leftarrow j F_h \bmod F_{h+2}$\;
    $w_H \leftarrow (j - 1) F_h \bmod F_{h+2}$\;
  }
  \lIf{$(\phi^{-1} w_L)_1 \in (\gamma : \gamma + 1/r)_1$}{\KwRet{$w_L$}}\;
  \lElse{\KwRet{$w_H$}}\;
}
\caption{Algorithm $\textsc{FindW}$ finds a multiple of $\phi^{-1}$ in a specified interval modulo $1$.}
\label{algorithm-FindW}
\end{algorithm}

\begin{theorem}
If $r < \phi^h$, then there exists at least one solution $(u, v)$ to $\textrm{MHG}(r, s, t)$ such that $v \le F_{2h+2} - 2$.
\label{theorem-MHG}
\end{theorem}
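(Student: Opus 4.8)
The plan is to reduce $\mathrm{MHG}(r,s,t)$ to a single application of Lemma \ref{lemma-FindW} and then control the size of $v$ with a short Fibonacci--Lucas calculation. First I would fix the shape of a candidate solution. Any solution satisfies $v \equiv t \pmod r$, so I write $v = t + wr$ for a non-negative integer $w$ to be determined; then the congruence $v \equiv t$ holds automatically and $u = G(v)$ is forced, leaving only the requirement $G(v) \equiv s \pmod r$. Using $G(v) = \lfloor \phi^{-1}(v+1)\rfloor$ and the fact that $\phi^{-1}(v+1)$ is irrational (hence never an integer), the condition $\lfloor \phi^{-1}(v+1)\rfloor \equiv s \pmod r$ is equivalent to $\phi^{-1}(v+1)$ lying in $\bigcup_{n \ge 0}(s+nr : s+nr+1)$, i.e. to $(\phi^{-1}(v+1)/r)_1 \in (s/r : (s+1)/r)_1$, an interval of width $1/r$ modulo $1$.

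Next I would substitute $v+1 = t+1+wr$ to peel off the multiple of $\phi^{-1}$. Since $\phi^{-1}(v+1)/r = \phi^{-1}w + \phi^{-1}(t+1)/r$, the condition becomes
\begin{equation*}
(\phi^{-1} w)_1 \in (\gamma : \gamma + 1/r)_1, \qquad \gamma = \left(\frac{s - \phi^{-1}(t+1)}{r}\right) \bmod 1 ,
\end{equation*}
which is precisely the interval hit by Algorithm $\textsc{FindW}$. Applying Lemma \ref{lemma-FindW} with this $\gamma$ (which lies in $[0,1)$ as required) yields an integer $w$ with $0 \le w \le F_{h+2}-1$ satisfying the displayed membership, and setting $v = t + wr$, $u = G(v)$ produces a Hofstadter G pair with $u \equiv s$ and $v \equiv t$. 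The mismatch between the half-open floor condition and the open $\textsc{FindW}$ interval is harmless, because $(\phi^{-1}w)_1$ is irrational and so cannot equal the rational endpoints $s/r$ and $(s+1)/r$.

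Finally I would bound $v$. From $t \le r-1$ and $w \le F_{h+2}-1$ one gets $v = t + wr \le rF_{h+2} - 1 < rF_{h+2}$, the estimate reused in Section \ref{section-hybrid}. To sharpen $rF_{h+2}$ to $F_{2h+2}-1$ I would invoke the identity $L_h F_{h+2} = F_{2h+2} + (-1)^h$ (where $L_h = \phi^h + \psi^h$ are the Lucas numbers, $\psi = -\phi^{-1}$) together with the exact integer bound on $r$ coming from $r < \phi^h$: since $\phi^h = L_h - \psi^h$, we have $\lfloor \phi^h\rfloor = L_h$ for odd $h$ and $\lfloor\phi^h\rfloor = L_h - 1$ for even $h$. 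For odd $h$ this gives $rF_{h+2} \le L_h F_{h+2} = F_{2h+2}-1$ (tight when $r = L_h$); for even $h$ it gives $rF_{h+2} \le (L_h-1)F_{h+2} = F_{2h+2}+1-F_{h+2} \le F_{2h+2}-1$ since $F_{h+2}\ge 2$. In either parity $v \le rF_{h+2}-1 \le F_{2h+2}-2$, as claimed.

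I expect the main obstacle to be the reduction in the first two paragraphs: turning the modular floor condition on $G(v)$ into a clean ``multiple of $\phi^{-1}$ in a width-$1/r$ interval mod $1$'' statement so that Lemma \ref{lemma-FindW} applies verbatim, while tracking the interval endpoints correctly. Once that translation is in place, existence of $w$ is immediate and the bound is a routine computation whose only delicacy is the parity of $h$, with the odd case saturating the stated bound.
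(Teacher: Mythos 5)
Your proposal is correct and takes essentially the same route as the paper's proof: reduce the congruence $G(wr+t) \equiv s \pmod{r}$ to finding $w$ with $(\phi^{-1}w)_1 \in (\gamma : \gamma + 1/r)_1$ where $\gamma = ((s - \phi^{-1}(t+1))/r)_1$, invoke Lemma \ref{lemma-FindW} to get $0 \le w \le F_{h+2}-1$, set $v = wr + t$, and then bound $rF_{h+2}$ using the exact value of $\lfloor \phi^h \rfloor$ split by the parity of $h$. The only cosmetic difference is that you package the final estimate via the Lucas-number identity $L_h F_{h+2} = F_{2h+2} + (-1)^h$, whereas the paper expands the product of Binet-type expressions directly; both yield $rF_{h+2} \le F_{2h+2}-1$ and hence $v \le F_{2h+2}-2$.
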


\begin{proof}
It is sufficient to find a nonnegative integer $w$ such that
\begin{equation*}
G(w r + t) = \lfloor \phi^{-1} (wr + t + 1) \rfloor \equiv s \pmod{r} \quad .
\end{equation*}
Rewrite the equation as interval membership modulo $r$ (observing as previously that the integer bound $s$ is not achievable):
\begin{equation*}
(\phi^{-1} (wr + t + 1))_r \in (s : s + 1)_r \quad .
\end{equation*}
Now move all terms except $\phi^{-1}wr$ to the right and normalize mod $1$:
\begin{equation*}
(\phi^{-1} w)_1 \in (\gamma : \gamma + 1/r)_1 \quad ,
\end{equation*}
where $\gamma = ((s - \phi^{-1}(t + 1))/r)_1$.  

By Lemma \ref{lemma-FindW}, there exists at least one such integer $w$ in the range $0 \le w \le F_{h+2}-1$ such that $(\phi^{-1}w)_1 \in (\gamma : \gamma + 1/r)_1$.  Fix such a $w$ and let $v = wr + t$.  If follows that $(G(v), v)$ is a solution to the MHG problem.

Next, consider the bound on $v$.  Based on the bounds on $t$ and $w$, it it easy to see that $0 \le v \le r F_{h+2} - 1$.  The maximum integer value of $r$, $\lfloor \phi^h \rfloor$, is $\phi^h + \phi^{-h} - 1$ if $h$ is even and $\phi^h - \phi^{-h}$ if $h$ is odd, following observations by Caveney and Catalini on OEIS sequence A014217.  For even $h$, this gives the following bound on $r F_{h+2}$:
\begin{eqnarray*}
r F_{h+2} & \le & \frac{\phi^{h+2} - \phi^{-(h+2)}}{\sqrt{5}} \cdot (\phi^h + \phi^{-h} - 1) \\
& = & \frac{\phi^{2h+2} - \phi^{h+2} + \phi^2 - \phi^{-2} + \phi^{-(h+2)} - \phi^{-(2h+2)}}{\sqrt{5}} \\
& = & F_{2h+2} - F_{h+2} + F_2 \quad ,
\end{eqnarray*}
which is at most $F_{2h+2} - 2$.  For odd $h$, the bound is
\begin{eqnarray*}
r F_{h+2} & \le & \frac{\phi^{h+2} + \phi^{-(h+2)}}{\sqrt{5}} \cdot (\phi^h - \phi^{-h}) \\
& = & \frac{\phi^{2h+2} - \phi^2 + \phi^{-2} - \phi^{-(2h+2)}}{\sqrt{5}} \\
& = & F_{2h+2} - 1 \quad .
\end{eqnarray*}
Thus, for both even and odd $h$, it follows that $0 \le v \le r F_{h+2} - 1 \le F_{2h+2} - 2$.  
\end{proof}

\begin{corollary}
There exists at least one solution $(u, v)$ such that the Fibonacci representation of $v$ is at most $2h-1$ bits long.
\end{corollary}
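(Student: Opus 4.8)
The plan is to read this corollary as an immediate consequence of Theorem \ref{theorem-MHG} together with the Fibonacci-representation length bound recorded in the Preliminaries. Theorem \ref{theorem-MHG} already produces a solution $(u, v)$ to $\textrm{MHG}(r, s, t)$ satisfying $v \le F_{2h+2} - 2$, so the only remaining task is to translate this upper bound on the \emph{value} of $v$ into an upper bound on the \emph{length} of a Fibonacci representation of $v$.

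First I would invoke the general fact stated in the Preliminaries: if a nonnegative integer $n$ satisfies $n \le F_{m} - 2$, then $n$ admits at least one Fibonacci representation that is no more than $m - 3$ bits long. Applying this with $m = 2h + 2$, and using the bound $v \le F_{2h+2} - 2$ from Theorem \ref{theorem-MHG}, I would conclude directly that $v$ has a Fibonacci representation of length at most $(2h + 2) - 3 = 2h - 1$. Since $(u, v)$ is a valid solution to the MHG problem, this exhibits a solution whose second coordinate has a representation of the claimed length.

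There is essentially no substantive obstacle here; the argument is a one-line substitution of indices into a previously stated bound. The only point requiring a moment's care is bookkeeping: confirming that the index shift is applied correctly (so that $F_{2h+2} - 2$ maps to length $2h - 1$ rather than an off-by-one value) and noting the trivial edge case where $v = 0$, for which the empty representation $\langle \rangle$ vacuously satisfies the length bound. With those checked, the corollary follows immediately.
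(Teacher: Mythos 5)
Your proof is correct and matches the paper's (implicit) reasoning exactly: the corollary is stated without proof because it follows immediately from the bound $v \le F_{2h+2} - 2$ in Theorem \ref{theorem-MHG} combined with the Preliminaries fact that $n \le F_m - 2$ guarantees a Fibonacci representation of length at most $m - 3$, which with $m = 2h+2$ gives $2h-1$. Your index bookkeeping and the $v = 0$ edge case are both handled correctly.
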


\begin{corollary}
There exists at least one solution $(u, v)$ such that the Zeckendorf representation of $v$ is at most $2h$ bits long.
\end{corollary}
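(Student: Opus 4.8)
The plan is to read the bound off directly from Theorem~\ref{theorem-MHG}, converting its magnitude bound on $v$ into a bound on the Zeckendorf length. Since $r < \phi^h$, Theorem~\ref{theorem-MHG} supplies a solution $(u, v)$ of $\textrm{MHG}(r, s, t)$ with $v \le F_{2h+2} - 2$. I would fix such a pair and argue only about the length of the Zeckendorf representation of this particular $v$.

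The key step is the magnitude-to-length translation recorded in the preliminaries: if $F_{m-1} \le n \le F_m - 1$, then the Zeckendorf representation of $n$ has length $m - 2$. Equivalently, the positive integers whose Zeckendorf length is exactly $L$ are precisely those in the range $[F_{L+1}, F_{L+2} - 1]$, so any positive integer $n \le F_{L+2} - 1$ has Zeckendorf length at most $L$. Taking $L = 2h$ gives the threshold $F_{2h+2} - 1$; since $v \le F_{2h+2} - 2 \le F_{2h+2} - 1$, the Zeckendorf representation of $v$ has length at most $2h$, which is the claim.

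Finally I would dispose of the degenerate case $v = 0$, whose Zeckendorf representation is the empty string of length $0 \le 2h$, so the bound holds there as well.

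I do not expect a genuine obstacle, as the corollary is essentially Theorem~\ref{theorem-MHG} restated in the language of Zeckendorf lengths. The one point meriting care is the index convention induced by $\textsc{FibSum}(\vec{\nu}) = \sum_i \nu_i F_{i+1}$: a length-$2h$ string has its most significant bit at position $2h$, corresponding to $F_{2h+1}$, and the largest value with a length-$2h$ Zeckendorf representation is $F_{2h+2} - 1$. I would verify this alignment so that the threshold $F_{2h+2} - 1$ matches the $v \le F_{2h+2} - 2$ delivered by the theorem, confirming that no off-by-one creeps into the final count.
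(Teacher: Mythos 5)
Your proof is correct and matches the paper's intent: the corollary is stated without explicit proof, and the intended argument is exactly your translation of the bound $v \le F_{2h+2} - 2$ from Theorem~\ref{theorem-MHG} into a length bound via the preliminary fact that integers in $[F_{m-1}, F_m - 1]$ have Zeckendorf length $m-2$. Your care with the indexing convention (length-$2h$ representations top out at $F_{2h+2}-1$) and the degenerate case $v=0$ is sound and introduces no discrepancy with the paper.
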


\begin{theorem}
Algorithm $\textsc{SolveMHG}$, shown in Figure \ref{algorithm-SolveMHG}, solves the MHG problem.  
\end{theorem}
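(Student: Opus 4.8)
The plan is to verify that Algorithm $\textsc{SolveMHG}$ faithfully implements the constructive argument already carried out in the proof of Theorem \ref{theorem-MHG}. That proof establishes that a solution to $\textrm{MHG}(r, s, t)$ can be obtained by (i) fixing $h$ to be the least positive integer with $r < \phi^h$, (ii) setting $\gamma = ((s - \phi^{-1}(t+1))/r)_1$, (iii) finding a nonnegative integer $w$ with $(\phi^{-1} w)_1 \in (\gamma : \gamma + 1/r)_1$, and (iv) returning the pair $(G(v), v)$ with $v = wr + t$. It therefore suffices to check that the algorithm carries out exactly these four computations and that each intermediate quantity is produced correctly.

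First I would confirm that the algorithm's choice of $h$ and its value of $\gamma$ match those in the reduction, and that it sets $v$ to $wr + t$ and $u$ to $G(v) = \lfloor \phi^{-1}(v+1)\rfloor$. Second, I would appeal to the already-established correctness of the subroutine $\textsc{FindW}$: by the lemma accompanying Figure \ref{algorithm-FindW}, $\textsc{FindW}(r, \gamma)$ returns a multiple $w$ of $\phi^{-1}$ lying (mod $1$) in the target interval $(\gamma : \gamma + 1/r)_1$, and by Lemma \ref{lemma-FindW} such a $w$ exists in the range $0 \le w \le F_{h+2}-1$ whenever $r < \phi^h$. Thus the precondition for applying the Theorem \ref{theorem-MHG} reduction is met.

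Finally I would chain the implications. By construction $v = wr + t \equiv t \pmod{r}$ immediately, so the second congruence holds. For the first, the interval membership $(\phi^{-1} w)_1 \in (\gamma : \gamma + 1/r)_1$ is, by the very rearrangement used in the proof of Theorem \ref{theorem-MHG}, equivalent to $(\phi^{-1}(wr + t + 1))_r \in (s : s+1)_r$, which says exactly that $u = \lfloor \phi^{-1}(v+1)\rfloor = G(v) \equiv s \pmod{r}$. Hence $(u, v)$ is a Hofstadter G pair (via the Granville--Rasson formula $G(v) = \lfloor \phi^{-1}(v+1)\rfloor$) satisfying both congruences, so it solves the MHG problem.

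I expect the main obstacle to be bookkeeping rather than conceptual: the real content lives in Lemma \ref{lemma-FindW} and Theorem \ref{theorem-MHG}, both already proved, so the remaining work is to match the algorithm's steps to the reduction line by line and to confirm that the single conditional test in $\textsc{FindW}$ (choosing between $w_L$ and $w_H$) is precisely the interval-membership test required. The only point needing genuine care is the mod-$1$ reduction defining $\gamma$ together with the open-versus-closed endpoints of the modular interval, where one must ensure --- as noted parenthetically in the Theorem \ref{theorem-MHG} proof --- that the integer bound $s$ is never attained, so that the floor evaluates to the intended residue.
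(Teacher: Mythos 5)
Your proposal is correct and takes essentially the same route as the paper: the paper's own proof is a one-line reduction stating that correctness follows from the steps in Theorem \ref{theorem-MHG} for computing $\gamma$, $w$, $v$, and $u$, which is precisely the line-by-line matching (together with the already-proved correctness of $\textsc{FindW}$) that you carry out in more detail.
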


\begin{proof}
This follows from the steps in Theorem \ref{theorem-MHG} for computing $\gamma$, $w$, $v$ and $u$.
\end{proof}

\emph{Remark.}  Algorithms $\textsc{SolveMHG}$ and $\textsc{FindW}$ may be ``rationalized'' by replacing $\phi^{-1}$ with the approximation $F_{h+1}/F_{h+2}$.  $\textsc{FindW}$ would then begin by setting $j$ as
\begin{equation*}
j \leftarrow \left \lceil \frac{s F_{h+2} - (t+1) F_{h+1}}{r} \right \rceil \bmod{F_{h+2}} \quad .
\end{equation*}
The proofs can be modified to accommodate such approximations, supporting a modified algorithm that involves only integer operations.

\begin{algorithm}
\Indp
\SetKwInOut{Input}{Input}
\SetKwInOut{Output}{Output}
\underline{Algorithm $\textsc{SolveMHG}$} \\
\BlankLine
\Input{A positive integer $r$ and two integers $s$ and $t$ between $0$ and $r-1$}
\Output{Two non-negative integers $u$ and $v$ such that $u \equiv s \bmod{r}$, $v \equiv t \bmod{r}$, and $u = G(v)$}
\BlankLine
\Begin{
  $\gamma \leftarrow (s/r - \phi^{-1}(t + 1)/r)_1$\;
  $w \leftarrow \textsc{FindW}(\gamma, r)$\;
  $v \leftarrow w r + t$\;
  $u \leftarrow G(v)$\;
  \KwRet{$(u, v)$}\;
}
\caption{Algorithm $\textsc{SolveMHG}$ solves the modular Hofstadter G problem.}
\label{algorithm-SolveMHG}
\end{algorithm}

The following example illustrates Algorithms $\textsc{FindW}$ and $\textsc{SolveMHG}$.

Let $r = 177$, $s = 141$, and $t = 25$.  Because $r < \phi^{11}$, one may choose $h = 11$.

With these parameters, the interval $(\gamma : \gamma + 1/r)_1$ is bounded by
\begin{eqnarray*}
\gamma & = & \left(\frac{s - \phi^{-1}(t+1)}{r}\right)_1 = \left(\frac{141 - \phi^{-1}\cdot 26}{179}\right)_1 \approx 0.6979392 \\
\gamma + \frac{1}{r} & = & \left(\frac{s - \phi^{-1}(t+1) + 1}{r}\right)_1 = \left(\frac{141 - \phi^{-1}\cdot 26 + 1}{179}\right)_1 \approx 0.7035258 \quad .
\end{eqnarray*}

Algorithm $\textsc{FindW}$ computes
\begin{eqnarray*}
j & = & \lceil \gamma F_{h+2} \rceil \approx \lceil 0.6979392 \cdot 233 \rceil = 163 \\
w_L & = & j F_h \bmod{F_{h+2}} = 163 \cdot 89 \bmod{233} = 61 \quad .
\end{eqnarray*}
Because $(\phi^{-1} \cdot 61)_1 \approx 0.7000734$ is in the interval, the algorithm returns $w = 61$.

Algorithm $\textsc{SolveMHG}$ computes 
\begin{equation*}
v = wr + t = 61 \cdot 179 + 25 = 10944 \quad .
\end{equation*}
The corresponding value of $u$ is
\begin{equation*}
u = G(v) = \lfloor \phi^{-1} (v + 1) \rfloor = \lfloor \phi^{-1} \cdot 10945 \rfloor = 6764 \quad .
\end{equation*}
The correctness of the solution is confirmed by the congruences
\begin{eqnarray*}
10944 & \equiv & 25 \pmod{179} \\
6764 & \equiv & 141 \pmod{179} \quad .
\end{eqnarray*}

These parameters match the ones in Section \ref{section-hybrid} and Figure \ref{figure-hybrid-trace}, showing that the solution to the MHG problem, in this particular case with $h$ odd, gives rise to the same exponent $v$ as would be directly obtained by computing $v = kF_{h+1}$.

\section{Revisiting Anderson's algorithm}
\label{section-anderson}

Anderson \cite{anderson2014extended} recently showed how to solve a problem related to the MHG problem:  to locate a given pair of adjacent integers in the extended Fibonacci Zeckendorf array.  Morrison \cite{morrison1980stolarsky} had previously proved that every pair of positive integers appears exactly once as adjacent elements in the array.  Anderson extended the result to show that every pair of integers $(u, v)$ such that $v\phi + u > 0$ appears exactly once, and also gave three algorithms for locating these pairs, which may be called \emph{Anderson pairs}.

Anderson's first algorithm for locating a pair $(u, v)$ involves computing the Fibonacci successors of $(u, v)$ until a pair is reached whose Zeckendorf representations are single-index shifts of one another.  Assuming $(u, v)$ is in the left half of the array, such a condition signals that the resulting pair has reached the right half.  Indeed, although not called out in the algorithm, the recursion can stop as soon as the \emph{second} element of the resulting pair has reached the right half.  This can be also detected by checking whether the resulting pair is a Hofstadter G pair.

The hybrid approach to targeted exponentiation in Section \ref{section-hybrid} follows a similar pattern of recursion to Anderson's first algorithm, but in reverse.  Consider again the values of the exponent of $b$ generated in the trace in Figure \ref{figure-hybrid-trace}, moving backwards from the end (and writing left to right, with extended Fibonacci Zeckendorf array column numbers at the top):
\begin{equation*}
\begin{tabular}{||c|c|c|c|c|c|c|c|c|c|c|c||}
\hline \hline
-8 & -7 & -6 & -5 & -4 & -3 & -2 & - 1 & 0 & 1 & 2 & 3 \\ \hline
76 & 76 & 152 & 228 & 380 & 608 & 988 & 1596 & 2584 & 4180 & 6764 & 10944 \\
\hline \hline
\end{tabular}
\end{equation*}

The first Hofstadter G pair in this sequence is $(2584, 4180)$, marking the transition from the left half of the array to the right half.  Anderson's first algorithm would locate $(76, 76)$ in the left half of the array by recursing it $9$ times until it reaches $(4180, 6764)$.  The transition could also be detected at $(2584, 4180)$.  However, testing of successive pairs isn't needed at all, as the following observations show.  

\begin{lemma}
Define the function
\begin{equation*}
\delta(u, v) \stackrel{\Delta}{=} -u + \phi^{-1} v \quad .
\end{equation*}
If $(u, v)$ is a Hofstadter G pair, then $\delta(u, v) \in (-\phi^{-1} : \phi^{-2})$.
\label{lemma-Hofstadter-delta}
\end{lemma}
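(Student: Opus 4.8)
The plan is to unwind the definition of a Hofstadter G pair directly into an interval containing $\phi^{-1} v$ and then read off $\delta(u,v)$ from it, in exactly the style of the proof of Lemma~\ref{lemma-Hofstadter-recurrence}. Since $(u,v)$ is a Hofstadter G pair, by definition $u = G(v) = \lfloor \phi^{-1}(v+1) \rfloor$. First I would rewrite this floor relation as interval membership,
\[
\phi^{-1}(v+1) \in (u : u+1) \quad ,
\]
where the lower bound is open because $\phi$ is irrational and hence $\phi^{-1}(v+1)$ can never be an integer --- the same observation already invoked earlier.

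Next I would expand $\phi^{-1}(v+1) = \phi^{-1}v + \phi^{-1}$ and subtract $u + \phi^{-1}$ throughout, converting the membership into a statement about $\delta(u,v) = -u + \phi^{-1}v$:
\[
\delta(u,v) \in (-\phi^{-1} : 1 - \phi^{-1}) \quad .
\]
The final step is to identify the upper endpoint with $\phi^{-2}$. Using $\phi^{-1} = \phi - 1$ together with $\phi^2 = \phi + 1$ (equivalently, $\phi^{-2} = 2 - \phi$), one checks that $1 - \phi^{-1} = 2 - \phi = \phi^{-2}$, which yields precisely $\delta(u,v) \in (-\phi^{-1} : \phi^{-2})$ as claimed.

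I do not expect any genuine obstacle here: the argument is essentially a two-line manipulation of the floor definition. The only points warranting care are the justification that the lower bound is strict (the irrationality of $\phi^{-1}(v+1)$, guaranteeing the floor value is never attained) and the verification of the golden-ratio identity $1 - \phi^{-1} = \phi^{-2}$, both of which are routine.
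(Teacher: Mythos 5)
Your proposal is correct and follows exactly the paper's own argument: rewrite the floor definition $u = \lfloor \phi^{-1}(v+1) \rfloor$ as the interval membership $\phi^{-1}(v+1) \in (u : u+1)$ and subtract $u + \phi^{-1}$ from both sides. The only difference is that you make explicit the intermediate upper endpoint $1 - \phi^{-1}$ and the identity $1 - \phi^{-1} = \phi^{-2}$, which the paper's proof folds into a single step.
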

\begin{proof}
Expand $G(v)$ as usual and rewrite it as interval membership, as in Lemma \ref{lemma-Hofstadter-recurrence}:
\begin{equation*}
\phi^{-1}(v + 1) \in (u : u + 1) \quad .
\end{equation*}
Now subtract $u + \phi^{-1}$ from both sides:
\begin{equation*}
-u + \phi^{-1} v \in (-\phi^{-1} : \phi^{-2}) \quad .
\end{equation*}
The proof follows.
\end{proof}

\begin{lemma}
If $(u, v)$ is a Hofstadter G pair where $v$ is in column $1$ of the extended Fibonacci Zeckendorf array, i.e., the Zeckendorf representation $\vec{\beta}$ of $v$ has $\beta_1 = 1$, then $\delta(u, v) \in (-\phi^{-1} : -\phi^{-3})$.
\label{lemma-Hofstadter-first-column}
\end{lemma}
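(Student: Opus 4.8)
The plan is to reduce the statement to the error-term estimate already developed in the proof of Lemma~\ref{lemma-Hofstadter-downshift}, thereby sharpening the bound of Lemma~\ref{lemma-Hofstadter-delta}. Recall from that proof that for the Zeckendorf representation $\vec{\beta}$ of $v$ one has $\phi^{-1}(v+1) = \sum_{i}\beta_i F_i + \epsilon + \phi^{-1}$, where $\epsilon = -\sum_{i=1}^{\|\vec{\beta}\|}\beta_i(-\phi)^{-(i+1)}$, and that $\sum_i \beta_i F_i = u$ (using $F_1 = 1$ to absorb the $\beta_1$ term). Subtracting gives $\phi^{-1}v - u = \epsilon$, i.e.\ $\delta(u,v) = \epsilon$ exactly. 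So the task becomes: show that $\beta_1 = 1$ forces $\epsilon \in (-\phi^{-1} : -\phi^{-3})$, tightening the general bound $\epsilon \in (-\phi^{-1} : \phi^{-2})$.

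First I would isolate the $i=1$ term. Since $(-\phi)^{-2} = \phi^{-2}$ and $\beta_1 = 1$, write $\delta(u,v) = \epsilon = -\phi^{-2} + R$ with $R = -\sum_{i \ge 2}\beta_i(-\phi)^{-(i+1)}$. The conceptual crux is that the hypothesis enters only through the Zeckendorf constraint: because $\vec{\beta}$ is the \emph{Zeckendorf} representation, $\beta_1 = 1$ forces $\beta_2 = 0$ (no two consecutive ones), so the tail sum in fact begins at $i = 3$, $R = -\sum_{i \ge 3}\beta_i(-\phi)^{-(i+1)}$. This vanishing of $\beta_2$ is precisely what improves one endpoint of the interval.

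Next I would bound $R$ by the two extreme sign patterns, exactly as in Lemma~\ref{lemma-Hofstadter-downshift} but with the index starting at $3$. The even-$i$ (positive) contributions are at most $\sum_{i=4,6,\dots}\phi^{-(i+1)} = \phi^{-4}$, and the odd-$i$ (negative) contributions have magnitude at most $\sum_{i=3,5,\dots}\phi^{-(i+1)} = \phi^{-3}$, both strict suprema over finite strings. Hence $R \in (-\phi^{-3} : \phi^{-4})$, so $\delta(u,v) = -\phi^{-2} + R \in (-\phi^{-2}-\phi^{-3} : -\phi^{-2}+\phi^{-4})$. Simplifying the endpoints with the golden-ratio identities $\phi^{-2}+\phi^{-3} = \phi^{-1}$ and $\phi^{-2}-\phi^{-4} = \phi^{-3}$ (both consequences of $\phi^{-1} = \phi - 1$) yields exactly $\delta(u,v) \in (-\phi^{-1} : -\phi^{-3})$.

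I expect the only real obstacle to be the bookkeeping in the third step: one must resum the two geometric series correctly to $\phi^{-4}$ and $\phi^{-3}$, and confirm the bounds are strict (open) because a finite Zeckendorf string can only approach, never attain, the all-even or all-odd tail. Everything else is a routine consequence of the identity $\delta(u,v) = \epsilon$ together with the forced vanishing of $\beta_2$.
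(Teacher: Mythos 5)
Your proof is correct and follows essentially the same route as the paper's (much terser) argument: both reduce $\delta(u,v)$ to the error term $\epsilon$ from Lemma~\ref{lemma-Hofstadter-downshift} and exploit the Zeckendorf constraint that $\beta_1 = 1$ forces $\beta_2 = 0$, so the mandatory $-\phi^{-2}$ contribution and the absent $+\phi^{-3}$ contribution tighten the general interval of Lemma~\ref{lemma-Hofstadter-delta} to $(-\phi^{-1} : -\phi^{-3})$. Your write-up simply makes explicit what the paper sketches, including the identity $\delta(u,v) = \epsilon$ and the tail-sum bounds $R \in (-\phi^{-3} : \phi^{-4})$.
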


\begin{proof}
The result can be shown by similar analysis to the error bound in Lemma \ref{lemma-Hofstadter-downshift}, with the additional condition that $\beta_1 = 1$ and $\beta_2 = 0$ (as required by Zeckendorf form).  The upper bound on the interval in Lemma \ref{lemma-Hofstadter-delta} is thus reduced by $\phi^{-2}$ (because the error term corresponding to $\beta_1$, i.e., $-\phi^{-2}$, is always present), and again by $\phi^{-3}$ (because the error term corresponding to $\beta_2$, i.e., $\phi^{-3}$, is not).
\end{proof}

\begin{lemma}
If $(u, v)$ is a Hofstadter G pair where $v$ is in column $j \ge 1$ of the extended Fibonacci Zeckendorf array, i.e., the Zeckendorf representation $\vec{\beta}$ of $v$ has $\beta_1 = \cdots = \beta_{j-1} = 0$ and $\beta_j = 1$, then $\delta(u, v) \in (-\phi^{-j} : -\phi^{-(j+2)})$ if $j$ is odd, and $\delta(u, v) \in (\phi^{-(j+2)} : \phi^{-j})$ if $j$ is even.
\label{lemma-Hofstadter-right-half}
\end{lemma}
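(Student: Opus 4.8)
The plan is to collapse the whole statement onto a single alternating sum that has already surfaced, as the error term $\epsilon$, in the proof of Lemma \ref{lemma-Hofstadter-downshift}. First I would show that $\delta(u,v)$ is exactly that error term. Recall from that proof that applying $F_{i+1} = \phi F_i + (-\phi)^{-i}$ term by term yields $\phi^{-1}(v+1) = \sum_{i=1}^{\|\vec{\beta}\|}\beta_i F_i + \epsilon + \phi^{-1}$, so $\phi^{-1} v = \sum_{i=1}^{\|\vec{\beta}\|}\beta_i F_i + \epsilon$. Since $F_1 = 1$, the sum $\sum_{i=1}^{\|\vec{\beta}\|}\beta_i F_i$ equals $\textsc{FibSum}(\vec{\beta}\Downarrow 1) + \beta_1$, which is precisely $u$ by Lemma \ref{lemma-Hofstadter-downshift}. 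Substituting into $\delta(u,v) = -u + \phi^{-1}v$ makes the $u$ terms cancel, leaving $\delta(u,v) = \epsilon = -\sum_{i=1}^{\|\vec{\beta}\|}\beta_i(-\phi)^{-(i+1)}$.

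Next I would expose the column index $j$ by factoring. Writing $(-\phi)^{-(i+1)} = (-1)^{i+1}\phi^{-(i+1)}$ turns $\delta(u,v)$ into $\sum_i \beta_i(-1)^i\phi^{-(i+1)}$; then the hypothesis $\beta_1 = \cdots = \beta_{j-1} = 0$, $\beta_j = 1$ lets me pull out the leading factor and substitute $i = j + m$ to obtain $\delta(u,v) = (-1)^j\phi^{-(j+1)}S$, where $S = \sum_{m\ge 0}\beta_{j+m}(-1)^m\phi^{-m}$. The task reduces to bounding $S$, whose coefficient string is a Zeckendorf suffix beginning $1,0,\dots$: the leading $\beta_j = 1$ forces $\beta_{j+1}=0$, and no two consecutive ones occur thereafter.

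The heart of the argument is to prove $S \in (\phi^{-1}, \phi)$. I would split $S = 1 + T$ with $T = \sum_{m\ge 2}\beta_{j+m}(-1)^m\phi^{-m}$ and extremize over admissible Zeckendorf patterns. The supremum of $T$ comes from placing ones at all even positions $m = 2,4,6,\dots$, giving $\sum_{k\ge 1}\phi^{-2k} = \phi^{-2}/(1-\phi^{-2}) = \phi^{-1}$ by the identity $\phi^{-1}+\phi^{-2}=1$; the infimum comes from ones at the odd positions $m = 3,5,7,\dots$, giving $-\phi^{-3}/(1-\phi^{-2}) = -\phi^{-2}$. Both patterns respect Zeckendorf form, and any finite representation lies strictly inside these geometric limits, so $T \in (-\phi^{-2}, \phi^{-1})$ and hence $S \in (\phi^{-1}, \phi)$.

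Finally I would scale the interval for $S$ by the leading factor $(-1)^j\phi^{-(j+1)}$. For even $j$ the positive factor sends $(\phi^{-1},\phi)$ to $(\phi^{-(j+2)}, \phi^{-j})$; for odd $j$ the negative factor reverses the endpoints, giving $(-\phi^{-j}, -\phi^{-(j+2)})$, exactly as claimed. I expect the only real obstacle to be the bookkeeping in the third step---checking that the extremal one-placements genuinely are the worst admissible suffixes and that the strict, open-interval inequalities survive the passage to the infinite geometric series---since all the algebra rests on the single identity $\phi^{-1}+\phi^{-2}=1$. As a consistency check, taking $j = 1$ recovers $(-\phi^{-1}, -\phi^{-3})$, which is precisely Lemma \ref{lemma-Hofstadter-first-column}.
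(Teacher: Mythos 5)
Your proof is correct, but it takes a genuinely different route from the paper. The paper proves this lemma by induction on the column index: starting from the $j=1$ base case (the interval $(-\phi^{-1} : -\phi^{-3})$ of Lemma \ref{lemma-Hofstadter-first-column}), it uses Lemma \ref{lemma-Hofstadter-recurrence} to note that recursing $(u,v) \mapsto (v, u+v)$ stays within Hofstadter G pairs and moves $v$ one column to the right, and then observes the single identity $\delta(v, u+v) = -\phi^{-1}\delta(u,v)$, so each column move multiplies the interval bounds by $-\phi^{-1}$. You instead compute $\delta(u,v)$ in closed form: it equals the error term $\epsilon = -\sum_i \beta_i(-\phi)^{-(i+1)}$ from the proof of Lemma \ref{lemma-Hofstadter-downshift}, which after factoring out $(-1)^j\phi^{-(j+1)}$ reduces the lemma to bounding the suffix sum $S$ over admissible Zeckendorf tails, handled by extremal patterns and the identity $\phi^{-1}+\phi^{-2}=1$. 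Your extremal configurations (ones at all even, respectively all odd, offsets) do satisfy the no-consecutive-ones constraint and coincide with the unconstrained optima, and finiteness of actual representations gives the needed strict inequalities, so the argument is sound. What each approach buys: the paper's induction is shorter given its existing lemmas and directly mirrors the array's recursion structure, which it then reuses (with the factor $-\phi$ in place of $-\phi^{-1}$) to precurse into the left half of the array for the following lemma; your direct computation gives the exact value of $\delta(u,v)$ as an alternating series, subsumes Lemma \ref{lemma-Hofstadter-first-column} as the case $j=1$ rather than relying on it as a base case, and makes the strictness of the open intervals transparent. Incidentally, your derivation quietly corrects a slip in the paper: its proof cites Lemma \ref{lemma-Hofstadter-delta} as covering $j=1$, when the stated base interval is actually that of Lemma \ref{lemma-Hofstadter-first-column}.
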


\begin{proof}
The interval can be shown by induction.  Lemma \ref{lemma-Hofstadter-delta} covers $j = 1$.  Now suppose that $(u, v)$ is a Hofstadter G pair where $v$ is in column $j - 1$ for $j > 1$.  By definition of the array, $(u, u+v)$ moves one column to the right, i.e., $u+v$ is in column $j$.  The pair is also a Hofstadter G pair by Lemma \ref{lemma-Hofstadter-recurrence}.  Now consider the value $\delta(v, u+v)$:
\begin{equation*}
\delta(v, u+v) = -v + \phi^{-1}(u+v) = -\phi^{-1}u - \phi^{-2}v = -\phi^{-1}\delta(u, v) \quad .
\end{equation*}
The bounds of the interval are thus multiplied by $-\phi^{-1}$ with each move, and the result follows.
\end{proof}

These lemmas, which recurse the intervals to the right, also show that every Hofstadter G pair $(u, v)$ occurs exactly once in a ``one-column-extended'' Fibonacci Zeckendorf array that also includes column $0$ from the left hand side.  This array may be considered a \emph{semi-extended Wythoff array} by analogy with the extended Wythoff array that begins with column $-1$ per OEIS sequence A033513.

The intervals can be precursed to the left as well:
\begin{lemma}
If $(u, v)$ is an Anderson pair where $v$ is in column $j \le 0$ of the extended Fibonacci Zeckendorf array, then $\delta(u, v) \in (-\phi^{-j} : -\phi^{-(j+2)})$ if $j$ is odd, and $\delta(u, v) \in (\phi^{-(j+2)} : \phi^{-j})$ if $j$ is even.
\end{lemma}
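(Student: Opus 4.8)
The plan is to mirror the proof of Lemma~\ref{lemma-Hofstadter-right-half}, but to recurse to the \emph{left} rather than to the right, thereby extending the same closed form for the interval to all columns $j \le 0$. The key is to identify the ``predecessor'' move on pairs that \emph{decreases} the column index by one and to show that it scales $\delta$ by the reciprocal factor $-\phi$, inverting the factor $-\phi^{-1}$ that the rightward move contributes in Lemma~\ref{lemma-Hofstadter-right-half}.

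First I would record the leftward step. Since the rightward move along a row sends $(u,v) \mapsto (v, u+v)$, its inverse --- the move one column to the left --- sends $(u,v) \mapsto (v-u, u)$. I would then compute $\delta(v-u,u)$ directly, using the identity $1 + \phi^{-1} = \phi$:
\begin{equation*}
\delta(v-u,u) = -(v-u) + \phi^{-1}u = -v + (1+\phi^{-1})u = \phi u - v = -\phi\bigl(-u + \phi^{-1}v\bigr) = -\phi\,\delta(u,v) \quad .
\end{equation*}
Thus a single leftward step multiplies $\delta$ by $-\phi$, exactly inverting the rightward factor of Lemma~\ref{lemma-Hofstadter-right-half}.

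Next I would run the induction \emph{downward}, anchoring on the $j = 1$ case already supplied by Lemma~\ref{lemma-Hofstadter-right-half} and descending through $j = 0, -1, -2, \ldots$. For the inductive step I would assume the stated interval at column $j$ and verify that multiplying by $-\phi$ reproduces the stated interval at column $j-1$. Because $-\phi$ is negative, it both scales the endpoints by $\phi$ and reverses their order; a short bookkeeping check dispatches the two parities. For instance, if $j$ is even with $\delta(u,v) \in (\phi^{-(j+2)} : \phi^{-j})$, then $\delta(v-u,u) \in (-\phi^{-(j-1)} : -\phi^{-(j+1)})$, which is precisely the claimed odd-column interval at $j-1$; the odd-to-even case is symmetric. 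Openness of the intervals is preserved automatically, since multiplication by $-\phi$ is a bijection of the reals carrying open intervals to open intervals.

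The main obstacle is not the algebra --- which is routine once the leftward scaling factor is in hand --- but justifying that the recursion stays within the class of objects for which the column structure is defined as we pass into columns $j \le 0$, where the entries may become negative. Here I would invoke Anderson's extension, under which every pair $(u,v)$ with $\phi v + u > 0$ occupies a unique cell of the extended array, so that the predecessor $(v-u, u)$ is again an Anderson pair one column to the left. I would confirm that the defining quantity is preserved, noting that $\phi u + (v-u) = \phi^{-1}(\phi v + u)$, so the leftward move scales $\phi v + u$ by $\phi^{-1} > 0$ and hence leaves its sign invariant. This guarantees that the descent never leaves the array and that every column $j \le 0$ is reached, completing the induction.
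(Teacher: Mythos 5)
Your proposal is correct and takes essentially the same route as the paper's proof: the paper likewise computes the precursion identity $\delta(v-u, u) = -(v-u) + \phi^{-1}u = \phi u - v = -\phi\,\delta(u,v)$ and then runs the induction of Lemma \ref{lemma-Hofstadter-right-half} leftward, with each step scaling the interval bounds by $-\phi$. Your extra check that the precursor $(v-u,u)$ remains an Anderson pair --- via $\phi u + (v-u) = \phi^{-1}(\phi v + u) > 0$ --- is a detail the paper leaves implicit, and it is a sound addition.
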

\begin{proof}
The proof is similar to Lemma \ref{lemma-Hofstadter-right-half} with the precursion multiplying the bounds of the interval by $-\phi$:
\begin{equation*}
\delta(v-u, u) = -(v-u) + \phi^{-1}u = \phi u - v = -\phi \delta(u, v) \quad .
\end{equation*}
\end{proof}

The intervals thus alternate between positive and negative based on the column of $v$:
\begin{equation*}
\begin{tabular}{||c||c|c|c|c|c||}
\hline \hline
Column & $-2$ & $-1$ & $0$ & $1$ & $2$ \\ \hline
Interval & $(1 : \phi^2)$ & $(-\phi : -\phi^{-1})$ & $(\phi^{-2} : 1)$ & $(-\phi^{-1} : -\phi^{-3})$ & $(\phi^{-4} : \phi^{-2})$ \\ \hline
Sign & $+$ & $-$ & $+$ & $-$ & $+$ \\ \hline
$\log_{\phi}$ & $(0 : 2)$ & $(-1 : 1)$ & $(-2 : 0)$ & $ (-3 : -1)$ & $(-4 : -2)$ \\
\hline \hline
\end{tabular}
\end{equation*}
The third row indicates the sign of the bounds of the interval, and fourth row shows the range of logarithm base $\phi$ of the absolute value of the bounds.  Because the intervals are mutually exclusive and collectively exhaustive (ignoring the exact powers of $\phi$ on the interval boundaries, which cannot be achieved), the value $\delta(u, v)$ falls in exactly one interval.  This means that it is possible to determine the column of $v$ directly from $\delta(u, v)$.  

\begin{theorem}
Let $(u, v)$ be an Anderson pair.  Let $m = \lceil \log_{\phi} |\delta(u, v)| \rceil$.  Then $v$ is located at column $j$ of the extended Fibonacci Zeckendorf array, where $j$ is defined as:
\begin{equation*}
j = \left\{
\begin{array}{ll}
2 \lfloor (-m/2) \rfloor & \textnormal{if $\delta(u, v) > 0$} \\
2 \lfloor ((1-m)/2) \rfloor - 1 & \textnormal{otherwise}
\end{array}
\right .
\quad .
\end{equation*}
The pair is located in row $n = uF_{-1-j} + vF_{-j}$.
\end{theorem}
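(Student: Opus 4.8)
The plan is to establish the two assertions in turn --- first the column formula, then the row formula --- both resting on the interval characterization already in hand. By Lemma~\ref{lemma-Hofstadter-right-half} together with its companion for columns $j \le 0$, if $v$ sits in column $j$ then $\delta(u,v)$ lies in $(\phi^{-(j+2)} : \phi^{-j})$ when $j$ is even and in $(-\phi^{-j} : -\phi^{-(j+2)})$ when $j$ is odd, and these intervals partition the line up to the unattainable powers of $\phi$ at their endpoints. Since each value of $\delta(u,v)$ lands in exactly one interval, the column $j$ is a well-defined function of $\delta(u,v)$, and it remains only to read it off from $m$ and the sign of $\delta$.

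For the column formula, I would first observe that the sign of $\delta(u,v)$ records the parity of $j$: positive for even $j$, negative for odd $j$ (and $\delta(u,v)\neq 0$ since every interval excludes $0$), which justifies the two cases of the formula. Taking absolute values collapses both interval families into $|\delta(u,v)| \in (\phi^{-(j+2)} : \phi^{-j})$, so that $\log_\phi |\delta(u,v)| \in (-(j+2) : -j)$ regardless of parity, and hence $m = \lceil \log_\phi |\delta(u,v)| \rceil$ can take only the two integer values $-(j+1)$ and $-j$. The crux is a short case check that the stated floor expressions send both of these values to the same $j$: for even $j$, $2\lfloor -m/2 \rfloor$ yields $j$ whether $m = -j$ or $m = -(j+1)$; for odd $j$, $2\lfloor (1-m)/2 \rfloor - 1$ does likewise. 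I would also note that the midpoint $\log_\phi |\delta(u,v)| = -(j+1)$ is harmless even when attained exactly, since its ceiling is $-(j+1)$, already one of the two admissible values.

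For the row formula, the key is to recognize that the row index $n$ of the extended Fibonacci Zeckendorf array is, by the convention of OEIS~A033513, the entry lying in column $-1$ of that row. Because every row obeys the Fibonacci recurrence $a_c = a_{c-1} + a_{c-2}$, and because an Anderson pair places $u$ in column $j-1$ and $v$ in column $j$ --- these being adjacent with $u = G(v)$, as guaranteed by Lemma~\ref{lemma-Hofstadter-recurrence} --- the entry in column $c$ is $a_c = v F_{c-j+1} + u F_{c-j}$ by the standard solution of the two-term recurrence over the extended Fibonacci numbers (checked at $c=j-1,j,j+1$). Specializing to $c = -1$ gives $a_{-1} = v F_{-j} + u F_{-1-j}$, which is exactly the claimed $n$.

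I expect the main obstacle to be the case analysis in the column formula: confirming that the two candidate values of $m$ both map to the correct $j$ under the floor expressions, and that boundary attainment does not break the argument. The row formula should reduce to identifying column $-1$ as the row label and applying the recurrence, and I would sanity-check the full statement against the worked pair $(2584, 4180)$ from Figure~\ref{figure-hybrid-trace}, which has $j = 1$, $m = -1$, and $n = -u + v = 1596$, matching its column-$(-1)$ entry.
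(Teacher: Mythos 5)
Your proposal is correct and follows essentially the same route as the paper's proof: the column formula is read off from the sign/parity pattern of the interval table (Lemma~\ref{lemma-Hofstadter-right-half} and its left-half companion), and the row formula comes from sliding the pair along the Fibonacci recurrence to column $-1$, whose entry is the row label per OEIS A033513. The only difference is that you spell out the case check on $m \in \{-j, -(j+1)\}$ and the general solution $a_c = vF_{c-j+1} + uF_{c-j}$ explicitly, details the paper's terse proof leaves implicit.
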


\begin{proof}
The column calculation follows the pattern in the table above.  The row number follows the movement of the pair. Precursing the pair back $j$ columns if $j > 0$, or recursing forward $-j$ columns, if $j < 0$, produces $(uF_{-1-j} + vF_{-j}, uF_{-j} + vF_{1-j})$.  At this point, the left element of the pair is in column $-1$, whose values give the row number of the extended Wythoff array, per OEIS sequence A033513.
\end{proof}

Although Anderson described his first algorithm as ``inefficient,'' the theorem shows that the algorithm actually can be made very efficient indeed.

In addition to locating an Anderson pair $(u, v)$, the theorem also provides a strategy for generalizing Algorithm $\textsc{HGPExp}$ to generate the exponentials $(a^u, a^v)$ for any such pair.  The resulting \emph{Anderson pair exponentiation algorithm} would first compute the column location $j$ of $v$.  If the column is in the right half of the array, i.e., $j \ge 1$, then the algorithm would proceed as in Algorithm $\textsc{HGPExp}$.  If it's in the left half, however, the algorithm would proceed as in Algorithm $\textsc{HGPExp}$ as far as the pair $(uF_{-j} + vF_{1-j}, uF_{1-j} + vF_{2-j})$ --- the one at the transition from the left half to the right --- and then precurse by $1-j$ columns.  In contrast to Algorithm $\textsc{FibExpDual}$, which could compute the two exponentials in fewer iterations, this new \emph{Anderson pair exponentiation algorithm} requires only three working registers, not four. 

Finally, a corollary of Lemma \ref{lemma-Hofstadter-first-column} provides an algorithm for determining the Zeckendorf representation of an integer from \emph{low to high}, as an alternative to the conventional ``greedy'' algorithm, which operates high to low.  
\begin{corollary}
If $v$ is in column $1$ of the extended Fibonacci Zeckendorf array, then $(\phi^{-1} v)_1 \in (\phi^{-2} : 1 - \phi^{-3})$, otherwise $(\phi^{-1} v)_1 \in (0 : \phi^{-2}) \cup (1 - \phi^{-3} : 1)$.
\end{corollary}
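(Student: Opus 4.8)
The plan is to express $(\phi^{-1}v)_1$ in terms of the quantity $\delta(u,v) = -u + \phi^{-1}v$ from the preceding lemmas, where $(u,v)$ is the Hofstadter G pair with $u = G(v)$. Since $u$ is an integer, $\phi^{-1}v = \delta(u,v) + u$ and hence $(\phi^{-1}v)_1 = (\delta(u,v))_1$. The corollary therefore reduces to reading off the reduction of $\delta(u,v)$ modulo $1$ from the column-indexed intervals already established, using the column $j \ge 1$ of $v$ (the position of the lowest set bit of its Zeckendorf representation).

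For the first assertion, observe that $v$ in column $1$ means $\beta_1 = 1$, so Lemma \ref{lemma-Hofstadter-first-column} gives $\delta(u,v) \in (-\phi^{-1} : -\phi^{-3})$. This interval is negative with magnitude below $1$, so reducing modulo $1$ adds $1$ and yields $(\delta(u,v))_1 \in (1 - \phi^{-1} : 1 - \phi^{-3})$. Applying the identity $1 - \phi^{-1} = \phi^{-2}$ (i.e.\ $\phi^{-1} + \phi^{-2} = 1$) rewrites the lower endpoint, giving $(\phi^{-1}v)_1 \in (\phi^{-2} : 1 - \phi^{-3})$ as claimed.

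For the ``otherwise'' assertion, $v$ lies in some column $j \ge 2$, and I would split on the parity of $j$ using Lemma \ref{lemma-Hofstadter-right-half}. For even $j \ge 2$ the lemma gives $\delta(u,v) \in (\phi^{-(j+2)} : \phi^{-j})$; this is positive and, since $\phi^{-j} \le \phi^{-2}$, reduces to itself inside $(0 : \phi^{-2})$. For odd $j \ge 3$ the lemma gives $\delta(u,v) \in (-\phi^{-j} : -\phi^{-(j+2)})$; this is negative of magnitude at most $\phi^{-3}$, so modular reduction gives $(1 - \phi^{-j} : 1 - \phi^{-(j+2)}) \subseteq (1 - \phi^{-3} : 1)$. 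Taking the union over all $j \ge 2$ gives $(\phi^{-1}v)_1 \in (0 : \phi^{-2}) \cup (1 - \phi^{-3} : 1)$.

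The computations are routine; the only points needing attention are the modular reduction (the sign of $\delta$ dictates whether $1$ is added) and the monotonicity bounds $\phi^{-j} \le \phi^{-2}$ for even $j \ge 2$ and $\phi^{-j} \le \phi^{-3}$ for odd $j \ge 3$, which guarantee each column's interval nests inside the claimed pieces. It is worth noting that the three intervals $(0:\phi^{-2})$, $(\phi^{-2}:1-\phi^{-3})$, and $(1-\phi^{-3}:1)$ partition $(0:1)$ up to the endpoints $\phi^{-2}$ and $1-\phi^{-3}$, which are unachievable powers of $\phi$; this is precisely why the dichotomy in $(\phi^{-1}v)_1$ faithfully detects whether $\beta_1 = 1$, justifying the low-to-high Zeckendorf algorithm.
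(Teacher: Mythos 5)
Your proposal is correct and matches the derivation the paper intends: the paper states this result as an immediate corollary of Lemma~\ref{lemma-Hofstadter-first-column} (with Lemma~\ref{lemma-Hofstadter-right-half} implicitly covering the columns $j \ge 2$), and your argument fills in exactly those steps --- observing $(\phi^{-1}v)_1 = (\delta(u,v))_1$ because $u = G(v)$ is an integer, then reducing the per-column intervals modulo $1$ using $1 - \phi^{-1} = \phi^{-2}$. The parity split on $j$ and the nesting bounds $\phi^{-j} \le \phi^{-2}$ (even $j$) and $\phi^{-j} \le \phi^{-3}$ (odd $j \ge 3$) are exactly what is needed, so there is no gap.
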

The Zeckendorf representation can thus be determined a bit at a time by checking the value of $(\phi^{-1} v)_1$, subtracting the bit, down-shifting by computing the Hofstadter G function, and repeating.  This process can also be ``rationalized'' similar to the remark on Algorithms $\textsc{SolveMHG}$ and $\textsc{FindW}$ in Appendix \ref{section-MHG} by approximating $\phi^{-1}$ as a ratio of Fibonacci numbers, with appropriate adjustments to the interval bounds.  It is also possible to enumerate the set of possible Fibonacci representations low to high by a similar approach, with overlapping intervals corresponding to available choices of $0$ and $1$ as least significant bits.  The low-to-high algorithm may be beneficial in implementations where the least significant bit is needed first, e.g., for the exponent in Algorithm $\textsc{FibExp}$ or $\textsc{FibExpDual}$ when run in the forward direction, and where it is preferable to compute the representation one bit at a time to save space, rather than all at once.

\end{document}